\documentclass[a4paper,oneside,10pt]{article}%
\usepackage{amsmath}
\usepackage{amsfonts}
\usepackage{amssymb}
\usepackage{graphicx}
\usepackage{enumerate}
\usepackage{color}
\usepackage[square,numbers,sort&compress]{natbib}%
\usepackage{hyperref}
\usepackage[amsmath,thmmarks]{ntheorem}
\hypersetup{hypertex=true,bookmarks=true,bookmarksnumbered	=true,linktocpage=false,colorlinks=true,linkcolor=blue,anchorcolor=blue,citecolor=red}
\usepackage{indentfirst} 
\usepackage[mathscr]{euscript}
\providecommand{\U}[1]{\protect \rule{.1in}{.1in}}

\newtheorem{theorem}{Theorem}[section]

\newtheorem{definition}[theorem]{Definition}

\newtheorem{lemma}[theorem]{Lemma}

\newtheorem{remark}[theorem]{Remark}

\newenvironment{proof}[1][Proof]{\noindent \textbf{#1.} }{\  $\Box$}
\numberwithin{equation}{section}

\begin{document}
	
\makeatletter
\newcommand{\rmnum}[1]{\romannumeral #1}
\newcommand{\Rmnum}[1]{\expandafter\@slowromancap\romannumeral #1@}
\makeatother
{\theoremstyle{nonumberplain}
}

\title{Some properties of $G$-SVIEs}

\author{Renxing Li\textsuperscript{1,}\thanks{Corresponding author. E-mail address: 202011963@mail.sdu.edu.cn} 
	\and Xue Zhang\textsuperscript{2}  
}

\footnotetext[1]{Zhongtai Securities Institute for Financial Studies,
	Shandong University, Jinan, Shandong 250100, PR China. 202011963@mail.sdu.edu.cn.}
\footnotetext[2]{Department of Mathematics, National University of Defense Technology, Changsha, Hunan 410073, PR China.
	zhangxue\_1998@nudt.edu.cn.}

\maketitle
	
\textbf{Abstract}. In this paper, we investigated the solvability of $G$-SVIEs under two cases: time-varying Lipschitz coefficients and integral-Lipschitz coefficients. Using the Picard iteration method, we established the existence and uniqueness of solutions to $G$-SVIEs under these two conditions. Additionally, we prove the continuity of the solution with respect to parameters in parameter-dependent $G$-SVIEs with Lipschitz coefficients.

{\textbf{Key words}. } $G$-Brownian motion; $G$-SVIE; integral-Lipschitz condition; time-varying condition
	
\textbf{AMS subject classifications.} 60H10, 45D05
	
\addcontentsline{toc}{section}{\hspace*{1.8em}Abstract}

\section{Introduction}
In classical probability theory, stochastic Volterra integral equation (SVIE) was first studied in \cite{berger1980volterra,ito1979on}, and its form is as follows:
\begin{equation*}
	X(t)=\phi(t)+\int_{0}^{t} b(t, s, X(s)) ds +\int_{0}^{t} \sigma(t, s, X(s)) dB(s), \quad t\in [0,T].
\end{equation*} 
Different from the stochastic differential equation (SDE),  coefficients of the SVIE incorporate the current time $t$, enabling it to characterize stochastic systems with memory. Therefore, SVIE is a non-trivial generalization of SDE. The existence and uniqueness of solution to the SVIE were initiated by \cite{berger1980volterra,ito1979on}. Protter\cite{protter1985volterra} studied the SVIE driven by a semimartingale. \cite{wang2008existence} established the existence and uniqueness of solutions to SVIEs with singular kernels and non-Lipschitz coefficients. For the research on the solvability and the comparison theorem of SVIEs, one can refer to \cite{tudor1989a,ferreyra2000comparison,wang2010symmetrical,wang2015com,wang2017parameter,pardoux1990stochastic} and the references therein. In addition, SVIEs are also closely related to the field of stochastic control. For the work in this area, one can refer to \cite{yong2008well,shi2015optimal,yong2006backward,wang2018lq,chen2007a,wang2017optimal,zhang2010stochastic} and the references therein.

In recent years, to address uncertainty issues in financial markets, Peng\cite{peng2005nonlinear,peng2007G,peng2019nonlinear,peng2004filtration,peng2008multi,li2011stopping} introduced the $G$-expectation theory. Gao \cite{gao2009pathwise} investigated pathwise properties and homeomorphic property with respect to the initial values for $ G $-SDEs. Bai and Lin \cite{bai2014on} established the existence and uniqueness of solution to the $G$-SDE with integral-Lipschitz coefficients. Zhao\cite{zhao2025G-SVIE} introduced SVIEs driven by $G$-Brownian motion ($G$-SVIEs) and established the solvability under Lipschitz conditions. More related works can be found in \cite{liu2019multi,peng2019nonlinear,denis2011function,hu2016quasi} and the references therein. 

In this paper, we investigated the solvability of $G$-SVIEs under two cases: time-varying Lipschitz coefficients ((H1)-(H3) in Section \ref{sec3 G-SVIE time-varying}) and integral-Lipschitz coefficients ((H1')-(H3') in Section \ref{sec4 G-SVIE non-lip}). By means of Lemma \ref{sigma MG} and Lemma \ref{nonlip sigmaMG}, we verify that both types of coefficients belong to the $G$-expectation space. Then, using the Picard iteration method, we established the existence and uniqueness of solutions to $G$-SVIEs under these two conditions. Additionally, we prove the continuity of the solution with respect to parameters in parameter-dependent $G$-SVIEs with Lipschitz coefficients.

This paper is organized as follows. In Section \ref{sec2 pre}, we introduce some fundamental notations and results about $ G $-expectation theory. Section \ref{sec3 G-SVIE time-varying} established the existence and uniqueness of solutions to $G$-SVIEs with time-varying Lipschitz coefficients. The existence and uniqueness of solutions to $G$-SVIEs with
integral-Lipschitz coefficients is proved in Section \ref{sec4 G-SVIE non-lip}. Section \ref{sec5 G-SVIE para} studies $G$-SVIEs with a parameter.

\section{Preliminaries}\label{sec2 pre}

In this section, we recall some fundamental notions and results of the $G $-expectation and related $G $-stochastic analysis. More details can be found in \cite{peng2004filtration,peng2005nonlinear,peng2007G,peng2008multi,peng2019nonlinear}.

Let $\Omega = C_{0}(\mathbb{R}^+)$ denote the space of all $\mathbb{R}$-valued continuous functions $(\omega_t)_{t\in\mathbb{R}^+}$, with $\omega_0= 0$, equipped with the distance 
\begin{equation*}
	\rho(\omega^{1},\omega^{2}):=\sum_{i=1}^{\infty}2^{-i}[(\max_{t\in[0,i]}|\omega_t^{1}-\omega_t^{2}|)\wedge 1],\quad \omega^{1},\omega^{2}\in\Omega.
\end{equation*}
The corresponding canonical process is $B_t(\omega)=\omega_t$, $t\in\mathbb{R}^+ $. For each given $t\geq0$, we define 
\begin{equation*}
	Lip(\Omega_t):=\{\varphi(B_{t_1\wedge t},\cdots,B_{t_n\wedge t}):n\in\mathbb{N},t_1,\cdots,t_n\in[0,\infty),\varphi\in C_{l.Lip}(\mathbb{R}^{ n})\},\
	Lip(\Omega):=\cup_{n=1}^{\infty}Lip(\Omega_{n}),
\end{equation*}
where $C_{l,Lip}(\mathbb{R}^{ n}) $ is the linear space comprising all local Lipschitz functions on $\mathbb{R}^{n} $.

Let $G:\mathbb{R}\rightarrow\mathbb{R} $ be a given monotonic, sublinear
function, i.e., for $x\in\mathbb{R} $, 
\begin{align}  \label{G}
	G(x)=\frac{1}{2}(\bar{\sigma}^{2}x^{+}-\underline{\sigma}^{2}x^{-}),
\end{align}
where $0<\underline{\sigma}\leq\bar{\sigma}<\infty $. For each given
function $G$, Peng constructed the $G $-expectation $\hat{\mathbb{E}} $ on $(\Omega,Lip(\Omega)) $(see \cite{peng2019nonlinear} for definition). Then the canonical process $B $ is a one-dimensional $G $-Brownian motion under $\hat{\mathbb{E}} $.

For each $p\geq1 $, $L_{G}^{p}(\Omega) $ is defined as the completion of $Lip(\Omega) $ with respect to the norm $\|X\|_{L_{G}^{p}}=(\hat{\mathbb{E}}[|X|^{p}])^{\frac{1}{p}} $. Similarly, $L_{G}^{p}(\Omega_{t}) $ can be defined for each fixed $t $. $\hat{\mathbb{E}}:Lip(\Omega)\rightarrow\mathbb{R} $ can be continuously extended to the mapping from $L_{G}^{1}(\Omega) $ to $\mathbb{R} $. The sublinear expectation space $(\Omega,L_{G}^{1}(\Omega),\hat{\mathbb{E}}) $ is called $G $-expectation space.

Define $\mathcal{F}_{t}:=\sigma(B_{s}:s\leq t)$ and $\mathcal{F}:=\bigvee_{t\geq0}\mathcal{F}_{t} $. The representation theorem is stated as follows.

\begin{theorem}[\cite{hu2009representation,denis2011function}]
	\label{thm2.1} Let $(\Omega ,L_{G}^{1}(\Omega),\hat{\mathbb{E}}) $ be a $G $-expectation space. Then there exists a weakly compact set of probability measures $\mathcal{P} $ on $(\Omega,\mathcal{F}) $ such that 
	\begin{align}
		\hat{\mathbb{E}}[\xi]=\sup_{P\in\mathcal{P}}E_{P}[\xi], \ \text{ for each }\xi\in L_{G}^{1}(\Omega).
	\end{align}
\end{theorem}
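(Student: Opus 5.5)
The plan is to build $\mathcal{P}$ from the stochastic control (volatility uncertainty) representation of $G$-Brownian motion. First we realize $\hat{\mathbb{E}}$ on $Lip(\Omega)$ as an upper expectation. On an auxiliary probability space $(\Omega,\mathcal{F},P_0)$ carrying a standard Brownian motion $W$ with its augmented filtration $(\mathcal{F}^W_t)$, let $\Theta$ be the set of progressively measurable processes $\theta$ with values in $[\underline{\sigma},\bar{\sigma}]$, and define $P^\theta:=P_0\circ(\int_0^\cdot\theta_s\,dW_s)^{-1}$, a probability measure on $(\Omega,\mathcal{F})$.

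\textbf{Step 1: representation on cylinder functions.} For $\xi=\varphi(B_{t_1},\dots,B_{t_n})$ with $\varphi\in C_{l.Lip}(\mathbb{R}^n)$, we would show $\hat{\mathbb{E}}[\xi]=\sup_{\theta\in\Theta}E_{P^\theta}[\xi]$.
\begin{enumerate}[(i)]
\item Take $n=1$ first. Here $\hat{\mathbb{E}}[\varphi(x+B_t)]=u(t,x)$, where $u$ is the viscosity solution of the $G$-heat equation $\partial_t u=G(\partial_{xx}u)$ with $u(0,\cdot)=\varphi$.
\item Since $G(a)=\sup_{\sigma\in[\underline{\sigma},\bar{\sigma}]}\frac12\sigma^2 a$ by (\ref{G}), this equation is the HJB equation of the control problem $\sup_\theta E[\varphi(x+\int_0^t\theta\,dW)]$.
\item A dynamic programming principle, together with uniqueness of viscosity solutions (comparison for $C_{l.Lip}$ growth), identifies the two.
\item The multi-time case follows by backward induction over $t_n,\dots,t_1$. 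At each step the conditional $G$-expectation is again of this form, and the supremum over controls concatenates across time intervals. This concatenation uses measurable selection of $\epsilon$-optimal controls.
\end{enumerate}

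\textbf{Step 2: tightness and compactness.} By the BDG inequality, $E_{P^\theta}|B_t-B_s|^4\le C\bar{\sigma}^4|t-s|^2$ uniformly in $\theta$. Kolmogorov's criterion then gives tightness of $\{P^\theta\}$ on $\Omega$, so the weak closure $\mathcal{P}:=\overline{\{P^\theta\}}$ is weakly compact by Prokhorov. Taking the closure does not change the supremum over cylinder functions. A cylinder function is continuous in $\omega$, and it is uniformly integrable thanks to uniform moment bounds, so $P\mapsto E_P[\xi]$ is continuous along weak limits.

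\textbf{Step 3: extension to $L^1_G(\Omega)$.} Define $c$-norms via $\bar{\mathbb{E}}[X]:=\sup_{P\in\mathcal{P}}E_P[X]$ for measurable $X$. By Step 1, $\bar{\mathbb{E}}$ and $\hat{\mathbb{E}}$ agree on $Lip(\Omega)$, so $\bar{\mathbb{E}}[|X-Y|]=\|X-Y\|_{L^1_G}$ there. Take $\xi\in L^1_G$ and a sequence $\xi_n\in Lip(\Omega)$ Cauchy in $L^1_G$. Then $\xi_n$ is Cauchy in $\bar{\mathbb{E}}$, and a subsequence converges quasi-surely to a measurable version of $\xi$. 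This requires showing that $\bar{\mathbb{E}}[|\xi_n-\xi|]\to0$ under every $P$ simultaneously, i.e. a Borel–Cantelli argument for the capacity $\sup_P P(\cdot)$. Finally, $|\sup_P E_P[\xi_n]-\sup_P E_P[\xi]|\le\bar{\mathbb{E}}[|\xi_n-\xi|]\to0$ and $\hat{\mathbb{E}}[\xi_n]\to\hat{\mathbb{E}}[\xi]$, which yields the identity.

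The main obstacle is Step 1. This includes the comparison principle for viscosity solutions with polynomial growth, and especially the multi-step dynamic programming with measurable pasting of controls. I would handle it by proving the inequality $\ge$ via explicit piecewise-constant controls, which give a dense subfamily. The inequality $\le$ would follow from the super-solution property of $u$ combined with Itô's formula applied to a smooth mollification of $u$ under each $P^\theta$.
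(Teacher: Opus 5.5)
The paper gives no proof of this statement; it cites Hu--Peng and Denis--Hu--Peng. Your Steps 1--3 follow the Denis--Hu--Peng proof. These steps are: laws $P^\theta$ of $\int_0^\cdot\theta_s\,dW_s$ with $\theta$ valued in $[\underline{\sigma},\bar{\sigma}]$; identification on cylinder functions through the control problem whose HJB equation is the $G$-heat equation, plus induction over $t_n,\dots,t_1$; tightness from the uniform fourth-moment bound, then weak closure; and extension to $L_G^1(\Omega)$ via quasi-surely convergent subsequences from Borel--Cantelli for $\sup_{P\in\mathcal{P}}P(\cdot)$. If Step 1 is carried out as in your item (iii), the plan is sound.

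The gap is in the elementary substitute for (iii) that you propose in your last paragraph. It\^{o}'s formula for a mollification of $u$ (again a supersolution, since $G$ is convex), applied along an arbitrary $P^\theta$, only gives upper bounds $E_{P^\theta}[\xi]\le\hat{\mathbb{E}}[\xi]$. Density of piecewise-constant controls only transfers such a bound to all of $\Theta$, or equivalently shows that restricting to them does not change the supremum. Whichever inequality you attach to which argument, neither one exhibits a control that nearly attains $\hat{\mathbb{E}}[\xi]$. So the attainment inequality $\hat{\mathbb{E}}[\xi]\le\sup_{\theta\in\Theta}E_{P^\theta}[\xi]$ is left unproved.

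That direction needs the subsolution side of the problem. One option is the viscosity characterization of the value function in (iii). Another is an explicit near-optimal control: for one time point, take the feedback equal to $\bar{\sigma}$ where $\partial_{xx}u(t-s,X_s)\ge0$ and $\underline{\sigma}$ elsewhere, frozen on a fine time grid. Justify it by the interior $C^{1,2}$ regularity of $u$, which holds because the $G$-heat equation is uniformly parabolic ($\underline{\sigma}>0$) and convex. Then carry it through the induction over the time points.

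If you want to avoid control theory entirely, use the Hu--Peng route. Take $\mathcal{P}$ to be all probability measures on $\Omega$ dominated by $\hat{\mathbb{E}}$ on $Lip(\Omega)$. Hahn--Banach then gives attainment for free. Daniell--Stone on finite-dimensional marginals, Kolmogorov's extension theorem, and the continuity criterion with $\hat{\mathbb{E}}[|B_t-B_s|^4]=3\bar{\sigma}^4|t-s|^2$ show that every linear functional dominated by $\hat{\mathbb{E}}$ comes from such a measure, and that this family is tight.
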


From \cite{denis2011function}, we introduce the definition of capacity as
follows: 
\begin{equation*}
	\mathrm{c}(A):=\sup_{P\in\mathcal{P}}P(A), \ \text{ for each } A\in\mathcal{F}.
\end{equation*}

\begin{definition}
	A set $A\in\mathcal{F} $ is polar if $\mathrm{c}(A)=0 $ and a property is said to hold ``quasi-surely'\ (q.s.) if it holds outside a polar set.
\end{definition}

\begin{definition}
	For each $T>0 $ and $p\geq1 $, set 
	\begin{equation*}
		M_G^{p,0}(0,T):=\left\{\eta(t)=\sum_{j = 0}^{N - 1}\xi_jI_{[t_j,t_{j +1})}(t):N\in\mathbb{N},0= t_0< t_1<\cdots< t_N= T, \ \xi_j\in L_G^p(\Omega_{t_j})\right\}.
	\end{equation*}
	$M_{G}^{p}(0,T) $ is defined as the completion of $M_{G}^{p,0}(0,T) $ with respect to the norm $\|\eta\|_{M_{G}^{p}}:=(\hat{\mathbb{E}}
	[\int_{0}^{T}|\eta(t)|^{p}dt])^{\frac{1}{p}} $.
\end{definition}

According to \cite{li2011stopping,peng2007G,peng2008multi,peng2019nonlinear}, the integrals $\int_{0}^{t}\eta(s)dB_{s}$ and $\int_{0}^{t}\xi(s)d\langle
B\rangle_{s} $ are well-defined for $\eta(\cdot)\in M_{G}^{2}(0,T) $ and $\xi(\cdot)\in M_{G}^{1}(0,T) $, where $\langle B\rangle $ denotes the
quadratic variation process of $B $. By Proposition 3.4.5 and Corollary
3.5.5 in \cite{peng2019nonlinear}, we have 
\begin{equation*}
	\underline{\sigma}^{2}\mathrm{d}t\leq \mathrm{d}\langle B\rangle(t)\leq\bar{\sigma}^{2}\mathrm{d}t,\quad \text{q.s.}
\end{equation*}
and for $\eta(\cdot)\in M_{G}^{2}(0,T) $, 
\begin{align}  \label{ito}
	\hat{\mathbb{E}}\left[ \left|\int_{0}^{t}\eta(s)\mathrm{d}B(s) \right|^{2} \right] =\hat{\mathbb{E}}\left[\int_{0}^{t}|\eta(s)|^{2}\mathrm{d}\langle B\rangle(s)\right]\leq\bar{\sigma}^{2} \hat{\mathbb{E}}\left[\int_{0}^{t}|\eta(s)|^{2}\mathrm{d}s\right]
	.
\end{align}

\begin{theorem}[\cite{hu2014backward}]\label{BDG}
	Let $p\geq2 $ and $T>0 $. For all $\eta(\cdot)\in M_{G}^{p}(0,T) $, we have 
	\begin{align}  \label{bdg}
		\underline{\sigma}^{p}c_{p}\hat{\mathbb{E}}\left[\left(\int_{0}^{T}|\eta(s)|^{2}\mathrm{d}s\right)^{\frac{p}{2}}\right]\leq\hat{\mathbb{E}}\left[\sup_{t\in[0,T]}\left|\int_{0}^{t}\eta(s)\mathrm{d}B_{s} \right|^{p} \right] \leq\bar{\sigma}^{p}C_{p} \hat{\mathbb{E}}\left[\left(\int_{0}^{T}|\eta(s)|^{2}\mathrm{d}s\right)^{\frac{p}{2}} \right],
	\end{align}
	where $0<c_{p}<C_{p}<\infty $ are constants.
\end{theorem}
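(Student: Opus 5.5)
The plan is to prove the two-sided estimate (\ref{bdg}) first for simple integrands and then pass to the limit, using the representation of Theorem \ref{thm2.1} to reduce to classical martingale inequalities. Fix $\eta\in M_G^{p,0}(0,T)$, say $\eta(t)=\sum_{j}\xi_jI_{[t_j,t_{j+1})}(t)$. Under each $P\in\mathcal{P}$ the canonical process $B$ is a continuous $P$-martingale; this holds because $\mathcal{P}$ can be taken as the laws of $\int_0^\cdot\theta_s\,dW_s$ with $\theta$ progressively measurable and $\theta\in[\underline{\sigma},\bar{\sigma}]$. Its quadratic variation satisfies $\underline{\sigma}^2dt\le d\langle B\rangle_t\le\bar{\sigma}^2dt$ $P$-a.s., which is the quasi-sure statement recorded above. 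Moreover, the $G$-stochastic integral of a simple process coincides $P$-a.s. with the classical It\^o integral $\int_0^t\eta\,dB$ under $P$.

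The key step is to apply the classical Burkholder--Davis--Gundy inequality under each fixed $P$, which gives
\begin{equation*}
c_pE_P\Big[\Big(\int_0^T|\eta(s)|^2d\langle B\rangle_s\Big)^{p/2}\Big]\le E_P\Big[\sup_{t\le T}\Big|\int_0^t\eta\,dB\Big|^p\Big]\le C_pE_P\Big[\Big(\int_0^T|\eta(s)|^2d\langle B\rangle_s\Big)^{p/2}\Big].
\end{equation*}
The quasi-sure bounds on $d\langle B\rangle$ then yield
\begin{equation*}
\underline{\sigma}^p\Big(\int_0^T|\eta|^2ds\Big)^{p/2}\le\Big(\int_0^T|\eta|^2d\langle B\rangle\Big)^{p/2}\le\bar{\sigma}^p\Big(\int_0^T|\eta|^2ds\Big)^{p/2}.
\end{equation*}
Taking $\sup_{P\in\mathcal{P}}$ on all three members, and using $\hat{\mathbb{E}}=\sup_P E_P$ from Theorem \ref{thm2.1}, gives (\ref{bdg}) for simple $\eta$. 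One point needs care here: the random variables involved, namely $\sup_t|\int_0^t\eta\,dB|^p$ and $(\int_0^T|\eta|^2ds)^{p/2}$, must lie in $L_G^1(\Omega)$, or else $\hat{\mathbb{E}}$ must be read as the upper expectation $\sup_PE_P$ for them. For simple integrands with $\xi_j\in L^p_G$, I would check membership by approximating the $\xi_j$ in $Lip(\Omega)$ and using continuity of paths.

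The extension to general $\eta\in M_G^p(0,T)$ goes by density. Choose simple $\eta^n\to\eta$ in $M_G^p$. By Hölder's inequality, $\hat{\mathbb{E}}[(\int_0^T|\eta^n-\eta|^2ds)^{p/2}]\le T^{p/2-1}\|\eta^n-\eta\|_{M_G^p}^p\to0$. The upper bound applied to the differences $\eta^n-\eta^m$ then shows that $\sup_t|\int_0^t\eta^n dB|$ is Cauchy in $L^p_G$, with limit $\sup_t|\int_0^t\eta\,dB|$. Both sides of (\ref{bdg}) are continuous along the sequence by the triangle inequality in the norms $(\hat{\mathbb{E}}|\cdot|^p)^{1/p}$, so the inequality passes to the limit.

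The main obstacle is the identification in the first step: that each $P\in\mathcal{P}$ makes $B$ a martingale whose quadratic variation agrees $P$-a.s. with the $G$-quadratic variation, and that the $G$-integral agrees with the $P$-Itô integral. I would take this from the explicit construction of $\mathcal{P}$ in \cite{denis2011function}. If that route proved too heavy, the fallback is a direct proof of the upper bound: apply $G$-Itô's formula to $|\int_0^t\eta\,dB|^p$, use Doob-type maximal estimates under each $P$, and for the lower bound use the sublinear identity (\ref{ito}) together with the reverse BDG inequality for the $P$-martingale. Either way, the constants come out as the classical ones $c_p,C_p$.
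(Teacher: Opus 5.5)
The paper gives no proof of this statement---it is quoted from \cite{hu2014backward}---and your argument is the standard proof behind that citation: classical BDG under each $P\in\mathcal{P}$, the quasi-sure bounds $\underline{\sigma}^{2}\mathrm{d}t\leq\mathrm{d}\langle B\rangle(t)\leq\bar{\sigma}^{2}\mathrm{d}t$, the supremum over $P$, and then density of $M_{G}^{p,0}(0,T)$ in $M_{G}^{p}(0,T)$. It is correct as sketched; the only detail worth writing out is that, for general $\eta$, your uniform-in-$t$ Cauchy estimate is also what produces the quasi-surely continuous version of $t\mapsto\int_{0}^{t}\eta(s)\mathrm{d}B_{s}$ over which the supremum in (\ref{bdg}) is taken.
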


For each $T>0 $ and $p\geq1 $, define 
\begin{equation*}
	\tilde{M}_{G}^{p}(0,T):=\left\lbrace X(\cdot)\in M_{G}^{p}(0,T): X(t) \in L_{G}^{p}(\Omega_{t}) \text{ for each } t\in[0,T] \right\rbrace .
\end{equation*}

\begin{definition}
	A process $X(\cdot )\in \tilde{M}_{G}^{2}(0,T)$ is said to be mean-square continuous if for any $t^{\prime },t\in \lbrack 0,T]$, 
	\begin{equation*}
		\lim_{t^{\prime }\rightarrow t}\hat{\mathbb{E}}\left[ \left\vert
		X(t)-X(t^{\prime })\right\vert ^{2}\right] =0.
	\end{equation*}
\end{definition}

\begin{theorem}[\protect\cite{denis2011function}]
	\label{continuous} Let $X(\cdot)\in \tilde{M}_{G}^{p}(0,T) $ for $T>0 $ and $p\geq1 $. If there exists positive constants $c $ and $\varepsilon $ such that 
	\begin{equation*}
		\mathbb{E}[|X(t) - X(s)|^p]\leq c|t - s|^{1+\varepsilon},
	\end{equation*}
	then there exists a modification $\tilde{X}(\cdot) $ of $X(\cdot) $, which satisfies 
	\begin{equation*}
		\mathbb{E}\left[\left(\sup_{s\neq t}\frac{|\tilde{X}(t)-\tilde{X}(s)|}{|t-s|^{\alpha}}\right)^p\right]<\infty,
	\end{equation*}
	for every $\alpha\in[0,\varepsilon/p) $. Consequently, paths of $\tilde{X}(\cdot) $ are $\alpha $-order quasi-surely H\"{o}lder continuous for every $\alpha\in[0,\varepsilon/p) $. Moreover, $\tilde{X}(\cdot)\in \tilde{M}_{G}^{p}(0,T) $.
\end{theorem}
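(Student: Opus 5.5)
This is the $G$-expectation version of the Kolmogorov continuity criterion. I would reduce it to the classical dyadic (Garsia--Rodemich--Rumsey type) argument, run pathwise and estimated uniformly over the family $\mathcal{P}$ from Theorem \ref{thm2.1}. Here $\mathbb{E}$ in the statement is read as $\hat{\mathbb{E}}=\sup_{P\in\mathcal{P}}E_P$. Work on $[0,T]$, take $T=1$ for notation, and let $D_n=\{k2^{-n}:0\le k\le 2^n\}$, $D=\bigcup_n D_n$.

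\textbf{Step 1 (dyadic increments).} For $\gamma\in(\alpha,\varepsilon/p)$ define
\[
K_n:=\max_{1\le k\le 2^n}|X(k2^{-n})-X((k-1)2^{-n})|.
\]
Bounding the maximum by the sum and using subadditivity of $\hat{\mathbb{E}}$ gives
\[
\hat{\mathbb{E}}[K_n^p]\le 2^n c\,2^{-n(1+\varepsilon)}=c\,2^{-n\varepsilon}.
\]
Hence $\hat{\mathbb{E}}[(\sum_n 2^{n\alpha}K_n)^p]^{1/p}\le\sum_n 2^{n\alpha}c^{1/p}2^{-n\varepsilon/p}<\infty$. This uses Minkowski's inequality, which holds for $\hat{\mathbb{E}}[|\cdot|^p]^{1/p}$ because it holds under each $P$ and is preserved by taking the sup. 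Note that $\sum_n 2^{n\alpha}K_n$ is $\mathcal{F}$-measurable, so $\hat{\mathbb{E}}$ here is understood as the upper expectation $\sup_P E_P$.

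\textbf{Step 2 (chaining).} The standard chaining argument gives, pathwise, for $s,t\in D$ with $s\ne t$,
\[
\frac{|X(t)-X(s)|}{|t-s|^{\alpha}}\le C_\alpha\sum_n 2^{n\alpha}K_n=:C_\alpha M .
\]
Since $\hat{\mathbb{E}}[M^p]<\infty$, we get $\mathrm{c}(M=\infty)\le \hat{\mathbb{E}}[M^p\wedge\cdot]$-type Markov bounds tending to $0$. So $\mathrm{c}(M=\infty)=0$, and outside this polar set $X|_D$ is uniformly $\alpha$-Hölder.

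\textbf{Step 3 (the modification).} Define $\tilde X(t)=\lim_{D\ni s\to t}X(s)$ on $\{M<\infty\}$ and $\tilde X=0$ otherwise. The Hölder bound and its $L^p$ estimate pass to all $s,t\in[0,1]$ by continuity.

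To see that $\tilde X$ is a modification, fix $t$ and take $s_m\in D$ with $s_m\to t$. By the hypothesis, $\hat{\mathbb{E}}[|X(s_m)-X(t)|^p]\to0$. Along a subsequence Borel--Cantelli for the capacity $\mathrm{c}$ applies; this is valid since $\mathrm{c}$ is countably subadditive. It gives $X(s_m)\to X(t)$ q.s., so $\tilde X(t)=X(t)$ q.s.

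Doing this for every $\alpha<\varepsilon/p$ (rationals suffice) yields the Hölder statement.

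\textbf{Step 4 (membership in $\tilde M_G^p(0,T)$).} This is the main obstacle, since quasi-sure limits of $L_G^p$ elements need not lie in $L_G^p$. For each fixed $t$ we have $\tilde X(t)=X(t)$ q.s., and $L_G^p(\Omega_t)$ is defined up to polar sets, so $\tilde X(t)\in L_G^p(\Omega_t)$ as an element.

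For $\tilde X\in M_G^p$, I would approximate by the step processes
\[
\tilde X^n(r)=\sum_k \tilde X(k2^{-n})I_{[k2^{-n},(k+1)2^{-n})}(r)\in M_G^{p,0}.
\]
Then
\[
\hat{\mathbb{E}}\Bigl[\int_0^1|\tilde X^n(r)-\tilde X(r)|^p dr\Bigr]\le \hat{\mathbb{E}}[(C_\alpha M)^p]\,2^{-n\alpha p}\to0,
\]
using the pathwise Hölder bound. This shows $\tilde X$ is the $M_G^p$-limit of elements of $M_G^{p,0}$. It also equals $X$ in $M_G^p$, because under each $P$ Fubini gives $\tilde X=X$ $dt\otimes P$-a.e.
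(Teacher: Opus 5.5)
Your argument is correct and is essentially the standard proof. The paper gives none of its own and only cites \cite{denis2011function}, where the classical dyadic Kolmogorov chaining is run with bounds uniform over $\mathcal{P}$ (so that $\hat{\mathbb{E}}[M^p]=\sup_{P\in\mathcal{P}}E_P[M^p]<\infty$) and $\tilde X$ is defined as the limit along dyadic times, exactly as in your Steps 1--3.

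Your Step 4 goes further than the cited source, which only asserts $\tilde X(t)\in L_G^p$. There you approximate $\tilde X$ by the dyadic step processes $\tilde X^n\in M_G^{p,0}(0,T)$ via the pathwise H\"older bound, taking some $\alpha\in(0,\varepsilon/p)$. This is a valid justification of the final claim $\tilde X(\cdot)\in\tilde M_G^p(0,T)$.
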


\begin{lemma}[Bihari's inequality]\label{Bihari's inequality} 
	Let $\gamma :\mathbb{R}^{+}\rightarrow \mathbb{R}^{+}$ be a continuous and increasing function satisfies
	\begin{equation*}
		\int_{0^{+}}\frac{\mathrm{d}s}{\gamma \left( s\right) }=+\infty ,
		\quad \psi \left( 0\right) =0.
	\end{equation*}%
	If $u$ is a non-negative function on $\mathbb{R}^{+}$ such that
	\begin{equation*}
		u(t)\leq \int_{0}^{t}\gamma \left( u(s)\right) \mathrm{d}s,\quad t\geq 0,
	\end{equation*}
	then $u(t),t\geq 0$.
\end{lemma}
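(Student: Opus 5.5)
The statement is garbled as printed. The hypothesis should read $\gamma(0)=0$ rather than $\psi(0)=0$, and the conclusion should be $u(t)=0$ for all $t\geq 0$. I will prove this corrected version. I also assume $u$ is locally bounded and measurable, so that the integral makes sense.

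The plan is a comparison argument through the Osgood-type integral. Fix $T>0$ and suppose, for contradiction, that $u(t_0)>0$ for some $t_0\in(0,T]$.

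\emph{Step 1: a regularised majorant.} For $\varepsilon>0$ set
\begin{equation*}
v_\varepsilon(t):=\varepsilon+\int_0^t\gamma(u(s))\,\mathrm{d}s .
\end{equation*}
This function is absolutely continuous, nondecreasing, satisfies $v_\varepsilon\geq\varepsilon>0$, and $u\leq v_\varepsilon$ by hypothesis. Since $\gamma$ is increasing, for a.e. $t$ we get
\begin{equation*}
v_\varepsilon'(t)=\gamma(u(t))\leq\gamma(v_\varepsilon(t)).
\end{equation*}

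\emph{Step 2: integrate against $1/\gamma$.} Define $\Gamma(x):=\int_{\varepsilon_0}^{x}\frac{\mathrm{d}s}{\gamma(s)}$ for $x>0$, with a fixed $\varepsilon_0>0$. If $\gamma$ vanishes at some positive point, then by monotonicity $\gamma\equiv0$ near $0$, which forces $\gamma(u)\equiv0$ wherever $u$ is small. To avoid this degenerate case, replace $\gamma$ by $\gamma+\delta$ and let $\delta\downarrow0$ at the end; alternatively, handle the case $\gamma\equiv 0$ near $0$ directly. With $\gamma>0$ on $(0,\infty)$, $\Gamma$ is $C^1$ there, so the chain rule for absolutely continuous functions gives $\frac{\mathrm{d}}{\mathrm{d}t}\Gamma(v_\varepsilon(t))\leq1$. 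Hence
\begin{equation*}
\Gamma(v_\varepsilon(t_0))-\Gamma(\varepsilon)\leq t_0,
\qquad\text{that is,}\qquad
\int_\varepsilon^{v_\varepsilon(t_0)}\frac{\mathrm{d}s}{\gamma(s)}\leq t_0 .
\end{equation*}

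\emph{Step 3: let $\varepsilon\downarrow0$.} We have $v_\varepsilon(t_0)\geq u(t_0)>0$. For $\varepsilon<u(t_0)$ this gives
\begin{equation*}
\int_\varepsilon^{u(t_0)}\frac{\mathrm{d}s}{\gamma(s)}\leq t_0 .
\end{equation*}
The assumption $\int_{0^+}\mathrm{d}s/\gamma(s)=+\infty$ makes the left side tend to $+\infty$ as $\varepsilon\downarrow0$, which is a contradiction. Therefore $u\equiv0$.

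The main obstacle is the care needed in Step 2. One must justify the chain rule when $v_\varepsilon$ is only absolutely continuous, and one must handle possible zeros of $\gamma$ on $(0,\infty)$. The $\delta$-perturbation handles the zeros cleanly: the perturbed $\gamma+\delta$ is strictly positive, and the divergence of the integral is preserved in the limit by monotone convergence. Local boundedness of $u$ is what keeps $v_\varepsilon$ finite.
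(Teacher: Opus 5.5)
The paper gives no proof of this lemma. It is quoted as the classical Bihari (Osgood-type) inequality, with the typos you identified: $\psi(0)=0$ should read $\gamma(0)=0$, and the conclusion is $u\equiv 0$. The condition $\gamma(0)=0$ is in fact implied by continuity and the Osgood condition. Your argument is the standard comparison proof and it is correct:
\begin{itemize}
\item $v_\varepsilon$ is absolutely continuous with $v_\varepsilon'=\gamma(u)\le\gamma(v_\varepsilon)$ a.e.
\item $\Gamma$ is $C^1$, hence Lipschitz, on the compact range $[\varepsilon,v_\varepsilon(T)]$, so the chain rule gives $\int_\varepsilon^{u(t_0)}\mathrm{d}s/\gamma(s)\le t_0$.
\item The Osgood condition then yields the contradiction as $\varepsilon\downarrow0$.
\end{itemize}

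Two points deserve to be made explicit.

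First, in the $\delta$-perturbed case, Step 3 cannot be run as written. For each fixed $\delta>0$ one has $\int_{0^+}\mathrm{d}s/(\gamma(s)+\delta)<\infty$, so letting $\varepsilon\downarrow0$ gives no contradiction. The right order is:
\begin{itemize}
\item Obtain $\int_0^{u(t_0)}\frac{\mathrm{d}s}{\gamma(s)+\delta}\le t_0$. Here you may even take $\varepsilon=0$, since $\gamma+\delta>0$ on $[0,\infty)$.
\item Let $\delta\downarrow0$ by monotone convergence, with $1/\gamma=+\infty$ on the zero set of $\gamma$. The limit is $+\infty$ both in the nondegenerate case (Osgood) and in the degenerate case (an infinite integrand on a set of positive measure).
\end{itemize}
Your last paragraph says this in words, but Step 3 as written appeals to the wrong limit.

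Alternatively, the degenerate case is immediate. If $\gamma\equiv0$ on $[0,a]$, the continuous nondecreasing function $w(t)=\int_0^t\gamma(u(s))\,\mathrm{d}s$ majorizes $u$. It therefore vanishes on every interval $[0,\tau]$ on which $w\le a$. By continuity such intervals cannot stop at a finite $\tau$, so $w\equiv0$.

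Second, the extra hypothesis you add is not cosmetic. Some local integrability of $\gamma(u(\cdot))$, which your local boundedness guarantees since $\gamma$ is nondecreasing, must be part of the statement. Otherwise take $\gamma(x)=x$, $u(0)=0$ and $u(s)=1/s$ for $s>0$: the hypothesis holds trivially with right-hand side $+\infty$, while $u\not\equiv0$.

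In the paper's application in Section~\ref{sec4 G-SVIE non-lip}, the function $u=\limsup u_{n,m}$ is bounded by the uniform estimate on $\sup_n\sup_{t}\hat{\mathbb{E}}[|X_n(t)|^2]$. It is also measurable, as a countable $\limsup$ of continuous functions. So your version of the lemma covers the use the paper makes of it.
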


\begin{lemma}[Jensen's inequality]
	\label{Jensen's inequality}Let $\rho :\mathbb{R}\rightarrow\mathbb{R}$ be a concave and increasing function. Then for $\xi \in L_{G}^{1}(\Omega )$, 
	\begin{equation*}
		\hat{\mathbb{E}}[\rho \left( \xi \right) ]\leq \rho \left( \hat{\mathbb{E}}[\xi ]\right) .
	\end{equation*}
\end{lemma}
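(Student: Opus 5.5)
The plan is to reduce the sublinear statement to the classical Jensen inequality under each probability measure in the representing family $\mathcal{P}$ of Theorem \ref{thm2.1}, and then use monotonicity of $\rho$ to pass through the supremum.

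\textbf{Step 1 (classical Jensen under each $P$).} Fix $P\in\mathcal{P}$. Since $\xi\in L_G^1(\Omega)$, we have $E_P[|\xi|]\le\hat{\mathbb{E}}[|\xi|]<\infty$, so $\xi$ is $P$-integrable. A concave function $\rho:\mathbb{R}\to\mathbb{R}$ is continuous and lies below its supporting lines. At $x_0=E_P[\xi]$ there is a line with $\rho(x)\le \rho(x_0)+a(x-x_0)$ for all $x$. Integrating this bound gives
\begin{equation*}
E_P[\rho(\xi)]\le \rho(E_P[\xi]).
\end{equation*}
This holds as an inequality in $[-\infty,\infty)$; the left side is well defined because $\rho(\xi)^+$ is $P$-integrable by the same linear bound.

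\textbf{Step 2 (monotonicity).} Since $E_P[\xi]\le \sup_{Q\in\mathcal{P}}E_Q[\xi]=\hat{\mathbb{E}}[\xi]$ and $\rho$ is increasing, we get $E_P[\rho(\xi)]\le\rho(\hat{\mathbb{E}}[\xi])$ for every $P\in\mathcal{P}$.

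\textbf{Step 3 (supremum over $\mathcal{P}$).} Taking the supremum over $P\in\mathcal{P}$ yields $\sup_P E_P[\rho(\xi)]\le\rho(\hat{\mathbb{E}}[\xi])$. The left side equals $\hat{\mathbb{E}}[\rho(\xi)]$ by Theorem \ref{thm2.1}, provided $\rho(\xi)\in L_G^1(\Omega)$.

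\textbf{Main obstacle: membership $\rho(\xi)\in L_G^1(\Omega)$.} Only here is care needed. In the later application $\rho$ is a concave increasing function on $\mathbb{R}^+$ with $\rho(0)=0$, hence of linear growth, and $\xi\ge0$. In that setting $\rho$ is also uniformly continuous (concave, increasing, $\rho(0)=0$ imply subadditivity). Approximate $\xi$ in $L_G^1$ by $\xi_n\in Lip(\Omega)$. Then $\rho(\xi_n)$ approximates $\rho(\xi)$ in $L_G^1$, using linear growth together with uniform continuity and a standard truncation argument.

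For a general concave $\rho$ on $\mathbb{R}$, I would instead interpret $\hat{\mathbb{E}}[\rho(\xi)]$ as the upper expectation $\sup_P E_P[\rho(\xi)]$. The inequality then follows directly from Steps 1–3 with no integrability hypothesis needed. I would state this convention explicitly in the proof.
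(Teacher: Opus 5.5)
The paper states this lemma in the preliminaries without proof, so there is no argument of the authors to compare with. Your proof is correct.

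You go through the representation theorem (Theorem \ref{thm2.1}): classical Jensen under each $P\in\mathcal{P}$, then monotonicity of $\rho$ to replace $E_P[\xi]$ by $\hat{\mathbb{E}}[\xi]$, then the supremum. The standard proof avoids $\mathcal{P}$ altogether. Take a supporting line at $x_0=\hat{\mathbb{E}}[\xi]$, $\rho(x)\le\rho(x_0)+a(x-x_0)$, with $a\ge 0$ because $\rho$ is increasing. Monotonicity, constant preservation and positive homogeneity of $\hat{\mathbb{E}}$ then give $\hat{\mathbb{E}}[\rho(\xi)]\le\rho(x_0)+a(\hat{\mathbb{E}}[\xi]-x_0)=\rho(x_0)$.

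That version is one line and holds for any sublinear expectation. There, monotonicity of $\rho$ does the job of your Step 2: it keeps the slope nonnegative so that $\hat{\mathbb{E}}[a\xi]=a\hat{\mathbb{E}}[\xi]$. Your route needs the family $\mathcal{P}$, but it makes the convention $\hat{\mathbb{E}}[\rho(\xi)]:=\sup_P E_P[\rho(\xi)]$ natural.

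The integrability point you flag is genuine and affects both routes equally. For $\rho$ concave and increasing on all of $\mathbb{R}$, $\rho(\xi)$ need not lie in $L_G^1(\Omega)$; take $\rho(x)=-e^{-x}$ and $\xi=-B_1^4$. So the lemma as written tacitly needs your convention or a growth condition.

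Your half-line version is in fact the one used in Section \ref{sec4 G-SVIE non-lip}. An Osgood modulus such as $\psi(x)=x\log(1/x)$ near $0$ has $\psi'_+(0)=+\infty$, so it admits no finite concave increasing extension to $\mathbb{R}$. Two small adjustments are needed there:
\begin{enumerate}
\item In Step 1, treat the case $E_P[\xi]=0$ separately. Then $\xi=0$ $P$-a.s., and no supporting line at the endpoint is needed.
\item Uniform continuity of $\rho$ requires continuity at $0$. Concavity on $[0,\infty)$ does not give this (e.g.\ $\rho=I_{(0,\infty)}$), but (H2') assumes it.
\end{enumerate}
With these, membership is quickest via the Denis--Hu--Peng characterization of $L_G^1(\Omega)$, rather than your truncation sketch. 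Note that $\rho(\xi_n^+)$ need not lie in $Lip(\Omega)$ when $\rho'_+(0)=\infty$. Instead, $\rho(\xi)$ is quasi-continuous and $0\le\rho(\xi)\le\rho(1)(1+\xi)$, which gives the required uniform integrability.
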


\section{$G$-SVIE with time-varying coefficients}\label{sec3 G-SVIE time-varying}

Let $(\Omega ,L_{G}^{1}(\Omega ),\hat{\mathbb{E}})$ be a $G$-expectation
space and let $0<\underline{\sigma }\leq \bar{\sigma}<\infty $. We shall
consider the following $G$-stochastic Volterra integral equation ($G$-SVIE): for each given $0\leq T<\infty $, 
\begin{equation}
	X(t)=\phi(t)+\int_{0}^{t}b(t,s,X(s))\mathrm{d}s+\int_{0}^{t}h(t,s,X(s))\mathrm{d}\langle B\rangle (s)+\int_{0}^{t}\sigma (t,s,X(s))\mathrm{d}B(s),\ t\in \lbrack 0,T].  \label{G-SVIE}
\end{equation}
Define $\bigtriangleup (t,s)=\left\{ (t,s)\in \lbrack 0,T]^{2}:s\leq
t\right\} $. Here $b(\omega,t,s,x),h(\omega,t,s,x),\sigma(\omega,t,s,x):\Omega\times\bigtriangleup(t,s)\times\mathbb{R}\rightarrow \mathbb{R}$ and $\phi (\omega ,t):\Omega \times \lbrack 0,T]\rightarrow \mathbb{R}$. The letter $C$ is constant which depends on the subscripts and may differ in different proofs. In the following assumptions, $\epsilon $ and $\bar{\epsilon}$ are constants and satisfy $0\leq \epsilon <\bar{\epsilon}$.

\begin{description}
	\item[(H1)] For each $t\in \lbrack 0,T]$ and $x\in \mathbb{R},\ b(t,\cdot,x),\ h(t,\cdot ,x),\ \sigma (t,\cdot ,x)\in M_{G}^{2+\epsilon }(0,t)$.
	
	\item[(H2)] For all $x,y\in 
	\mathbb{R}$ and $(t,s)\in \bigtriangleup (t,s)$, there exists a deterministic function $L(t,s):$ $\bigtriangleup (t,s)\rightarrow \mathbb{R}^{+}$ such that 
	\begin{align*}
		&\left\vert b(t,s,x)-b(t,s,y)\right\vert +\left\vert
		h(t,s,x)-h(t,s,y)\right\vert +\left\vert \sigma (t,s,x)-\sigma
		(t,s,y)\right\vert \leq L(t,s)\left\vert x-y\right\vert , \\
		&\left\vert b(t,s,x)\right\vert ^{2+\epsilon }+\left\vert h(t,s,x)\right\vert^{2+\epsilon }+\left\vert \sigma (t,s,x)\right\vert ^{2+\epsilon } \leq L^{2+\epsilon }(t,s)\left( 1+\left\vert x\right\vert ^{2+\epsilon }\right) .
	\end{align*}
	Meanwhile, $\sup_{t\in \lbrack 0,T]}\int_{0}^{T}L^{2+\bar{\epsilon}}(t,s)%
	\mathrm{d}s<\infty $.
	
	\item[(H3)] Let $t_{1}\geq t_{2}$. For all $x\in \mathbb{R}$ and $(t_{1},s)\in\bigtriangleup(t_{1},s),(t_{2},s)\in\bigtriangleup(t_{2},s)$, there exists a deterministic positive function $K(t_{1},t_{2},s)$ such that 
	\begin{align*}
		&\left\vert b(t_{1},s,x)-b(t_{2},s,x)\right\vert ^{2+\epsilon }+\left\vert h(t_{1},s,x)-h(t_{2},s,x)\right\vert ^{2+\epsilon }+\left\vert \sigma(t_{1},s,x)-\sigma (t_{2},s,x)\right\vert ^{2+\epsilon }\\
		\leq & K^{2+\epsilon}(t_{1},t_{2},s)\left( 1+\left\vert x\right\vert ^{2+\epsilon }\right) .
	\end{align*}
	Here $K(t_{1},t_{2},s)$ satisfies $\int_{0}^{t_{2}}K^{2+\epsilon}(t_{1},t_{2},s)\mathrm{d}s\leq C_{T}\rho \left( \left\vert t_{1}-t_{2}\right\vert \right) $ where $\rho :\mathbb{R}^{+}\rightarrow \mathbb{R}^{+}$ is continuous and strictly increasing with $\rho (0)=0$.
\end{description}

To ensure that the stochastic integral is well-defined in the $G$-expectation space, the following lemma is required.

\begin{lemma}\label{sigma MG}
	Let $0\leq \epsilon <\bar{\epsilon}$. Define  \[M(t):=\int_{0}^{t}\sigma (t,s,X(s))\mathrm{d}B(s), \ N(t):=\int_{0}^{t}b(t,s,X(s))\mathrm{d}s+\int_{0}^{t}h(t,s,X(s))\mathrm{d}\langle B\rangle (s).\] 
	Assume that $X(\cdot )\in M_{G}^{2+\epsilon }(0,T)$ and $\sup_{t\in\lbrack0,T]}\hat{\mathbb{E}}\left[\left\vert X(t)\right\vert ^{2+\epsilon }\right] <\infty $. If (H1)-(H3) hold, then $M(\cdot )+N(\cdot )\in \tilde{M}_{G}^{2+\epsilon }(0,T)$.
\end{lemma}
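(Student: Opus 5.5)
We must show two things: for each fixed $t$, $M(t)+N(t)\in L_G^{2+\epsilon}(\Omega_t)$ with $\sup_t\hat{\mathbb{E}}[|M(t)+N(t)|^{2+\epsilon}]<\infty$, and the process $t\mapsto M(t)+N(t)$ lies in $M_G^{2+\epsilon}(0,T)$, i.e.\ it can be approximated in the $M_G^{2+\epsilon}$-norm by step processes of the form $\sum_j\xi_jI_{[t_j,t_{j+1})}$ with $\xi_j\in L_G^{2+\epsilon}(\Omega_{t_j})$. The plan is to first get uniform moment bounds, then a modulus of continuity in $L_G^{2+\epsilon}$ in the variable $t$ controlled by $\rho$. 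Finally, approximate by the left-point step process built from $M(t_j)+N(t_j)$.

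\textbf{Step 1 (well-definedness and moments).} For fixed $t$, the integrand $s\mapsto\sigma(t,s,X(s))$ should belong to $M_G^{2+\epsilon}(0,t)$. Using (H1) and the Lipschitz bound in (H2), I would approximate $X$ by step processes $X^n\in M_G^{2+\epsilon,0}$. Then $\sigma(t,\cdot,X^n(\cdot))$ is in $M_G^{2+\epsilon}$ piecewise by (H1), since on each piece it is a Lipschitz function of a fixed $L_G$ random variable. This piece needs a small argument: first treat simple $\xi_j$ taking finitely many values, or use the $x$-Lipschitz continuity to pass from constant $x$ to $\xi_j$. Moreover,
\[
\hat{\mathbb{E}}\Big[\int_0^t|\sigma(t,s,X^n)-\sigma(t,s,X)|^{2+\epsilon}ds\Big]\le \hat{\mathbb{E}}\Big[\int_0^t L^{2+\epsilon}(t,s)|X^n-X|^{2+\epsilon}ds\Big].
\]
Here the difficulty is that $L$ is only integrable, not bounded. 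This is where $\bar\epsilon>\epsilon$ enters: apply H\"older with exponents $\frac{2+\bar\epsilon}{2+\epsilon}$ and its conjugate, so that $\sup_t\int L^{2+\bar\epsilon}<\infty$ controls the $L$ factor. For the $X$ factor we need a slightly higher moment, or a truncation: split $L$ into $L\wedge R$ and $L\,I_{\{L>R\}}$, and use $\int L^{2+\epsilon}I_{\{L>R\}}\le R^{\epsilon-\bar\epsilon}\int L^{2+\bar\epsilon}$ together with $\sup_s\hat{\mathbb{E}}|X(s)|^{2+\epsilon}<\infty$. The truncation route is what I would use; it avoids higher moments. Given this, the BDG inequality (Theorem \ref{BDG}) and H\"older in $s$ give
\[
\hat{\mathbb{E}}|M(t)|^{2+\epsilon}\le C\,\hat{\mathbb{E}}\Big[\int_0^tL^{2+\epsilon}(t,s)(1+|X(s)|^{2+\epsilon})ds\Big]\le C\Big(1+\sup_s\hat{\mathbb{E}}|X(s)|^{2+\epsilon}\Big)\sup_t\int_0^TL^{2+\epsilon}(t,s)ds .
\]
The same bound holds for $N$, using $d\langle B\rangle\le\bar\sigma^2ds$. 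Here we pass $\hat{\mathbb{E}}$ inside $\int ds$ by sublinearity. Also $M(t)+N(t)\in L_G^{2+\epsilon}(\Omega_t)$, since the integrals over $[0,t]$ are $\mathcal F_t$-measurable limits.

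\textbf{Step 2 (continuity in $t$).} For $t_1\ge t_2$, write
\[
M(t_1)-M(t_2)=\int_{t_2}^{t_1}\sigma(t_1,s,X)dB+\int_0^{t_2}[\sigma(t_1,s,X)-\sigma(t_2,s,X)]dB .
\]
The first term is bounded by $C\int_{t_2}^{t_1}L^{2+\epsilon}(t_1,s)ds\,(1+\sup\hat{\mathbb{E}}|X|^{2+\epsilon})$. By H\"older with $\bar\epsilon$, this is $\le C|t_1-t_2|^{(\bar\epsilon-\epsilon)/(2+\bar\epsilon)}$. The second term is bounded by (H3) by $C\rho(|t_1-t_2|)$. $N$ is handled similarly, so $\hat{\mathbb{E}}|(M+N)(t_1)-(M+N)(t_2)|^{2+\epsilon}\to0$ uniformly as $|t_1-t_2|\to0$.

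\textbf{Step 3 (approximation).} Set $Y^n(t)=\sum_j (M+N)(t_j)I_{[t_j,t_{j+1})}(t)$ on a uniform mesh. Each coefficient lies in $L_G^{2+\epsilon}(\Omega_{t_j})$ by Step 1, so $Y^n\in M_G^{2+\epsilon,0}$. Then
\[
\hat{\mathbb{E}}\int_0^T|Y^n-(M+N)|^{2+\epsilon}dt\le\int_0^T\hat{\mathbb{E}}|\cdot|^{2+\epsilon}dt\to0
\]
by Step 2. Hence $M+N\in M_G^{2+\epsilon}(0,T)$, and together with Step 1 it lies in $\tilde M_G^{2+\epsilon}(0,T)$.

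The main obstacle is Step 1's membership of the composed integrand $\sigma(t,\cdot,X(\cdot))$ in $M_G^{2+\epsilon}$ with an unbounded $L(t,\cdot)$. I expect the truncation against $\bar\epsilon$ to resolve it.
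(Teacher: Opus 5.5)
Your Steps 2 and 3 match the second half of the paper's proof (BDG plus H\"older, (H3) on $[0,t_2]$, the $\bar{\epsilon}$-H\"older bound on $\int_{t_2}^{t_1}L^{2+\epsilon}$, then the left-point step process), and they are fine. The gap is in Step 1, at the point you flagged yourself. You put the truncation inside the Lipschitz estimate for $\sigma(t,s,X^n)-\sigma(t,s,X)$, so the tail piece is
\[
\hat{\mathbb{E}}\Big[\int_0^t L^{2+\epsilon}(t,s)I_{\{L(t,s)>R\}}|X^n(s)-X(s)|^{2+\epsilon}\mathrm{d}s\Big]\le\int_0^t L^{2+\epsilon}(t,s)I_{\{L(t,s)>R\}}\,\hat{\mathbb{E}}\bigl[|X^n(s)-X(s)|^{2+\epsilon}\bigr]\mathrm{d}s .
\]
Making this small uniformly in $n$ requires $\sup_n\sup_s\hat{\mathbb{E}}[|X^n(s)|^{2+\epsilon}]<\infty$. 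The hypothesis $\sup_s\hat{\mathbb{E}}[|X(s)|^{2+\epsilon}]<\infty$ concerns $X$, not the approximants. Step processes supplied by the definition of $M_G^{2+\epsilon}(0,T)$ converge only in $\hat{\mathbb{E}}[\int_0^T|\cdot|^{2+\epsilon}\mathrm{d}s]$, with no pointwise-in-$s$ moment control. Since $L(t,\cdot)$ may be unbounded, that convergence does not carry over to the $L^{2+\epsilon}(t,s)$-weighted quantity, so the estimate as written does not close.

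The fix, which is what the paper does, is to truncate the coefficient rather than the Lipschitz bound: set $\sigma_R(t,s,x):=\sigma(t,s,x)I_{\{L(t,s)\le R\}}$. For fixed $R$ this is $R$-Lipschitz in $x$, so $\hat{\mathbb{E}}[\int_0^t|\sigma_R(t,s,X^n(s))-\sigma_R(t,s,X(s))|^{2+\epsilon}\mathrm{d}s]\le R^{2+\epsilon}\hat{\mathbb{E}}[\int_0^t|X^n(s)-X(s)|^{2+\epsilon}\mathrm{d}s]\to0$, which gives $\sigma_R(t,\cdot,X(\cdot))\in M_G^{2+\epsilon}(0,t)$. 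The remainder $|\sigma(t,s,X(s))|^{2+\epsilon}I_{\{L(t,s)>R\}}$ is handled by the growth bound in (H2) evaluated at $X$ alone. It contributes at most $R^{\epsilon-\bar{\epsilon}}\sup_s\hat{\mathbb{E}}[1+|X(s)|^{2+\epsilon}]\sup_t\int_0^TL^{2+\bar{\epsilon}}(t,s)\mathrm{d}s\to0$.

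Alternatively, you can keep your ordering if you choose approximants that inherit the moment bound, e.g.\ $X^n=\sum_j\bigl(\frac{1}{\Delta}\int_{t_{j-1}}^{t_j}X(r)\mathrm{d}r\bigr)I_{[t_j,t_{j+1})}$ with $\Delta$ the mesh. You would then have to prove separately that these converge in $M_G^{2+\epsilon}$.

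The paper gets membership of the truncated composed integrand by citing Hu--Wang--Zheng (Theorem 4.7) and Liu (Lemma 3.3). Your piecewise argument can serve as a self-contained substitute if you use the Lipschitz route: truncate $\xi_j$, then use a continuous partition of unity in $x$, with the error controlled by $\int_0^tL^{2+\epsilon}(t,s)\mathrm{d}s<\infty$. Do not use finitely-valued $\xi_j$: these involve indicators of events and need not lie in $L_G^{2+\epsilon}$.
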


\begin{proof}
	For each $n>0$, define 
	\begin{equation*}
		\sigma _{n}(t,s,x)=\sigma (t,s,x)I_{\left\{ L(t,s)\leq n\right\}}.
	\end{equation*}
	Clearly, for a fixed $t$, $\sigma _{n}(t,s,x)$ is Lipschitz continuous with respect to $x$. According to Theorem 4.7 in \cite{hu2016quasi}, we get $\sigma_{n}(t,\cdot ,x)\in M_{G}^{2+\epsilon }(0,t)$. Thanks to Lemma 3.3 in \cite{liu2019multi}, we yield $\sigma _{n}(t,\cdot ,X(\cdot ))\in M_{G}^{2+\epsilon}(0,t) $. From (H2), 
	\begin{align*}
		\hat{\mathbb{E}}\left[ \int_{0}^{t}\left\vert \sigma _{n}(t,s,X(s))-\sigma(t,s,X(s))\right\vert ^{2+\epsilon }\mathrm{d}s\right] =&\hat{\mathbb{E}}\left[ \int_{0}^{t}\left\vert \sigma (t,s,X(s))\right\vert ^{2+\epsilon}I_{\left\{ L(t,s)>n\right\} }\mathrm{d}s\right] \\
		\leq &\hat{\mathbb{E}}\left[ \int_{0}^{t}L^{2+\epsilon }(t,s)\left(1+\left\vert X(s)\right\vert ^{2+\epsilon }\right) I_{\left\{L(t,s)>n\right\} }\mathrm{d}s\right] \\
		\leq &\int_{0}^{t}L^{2+\epsilon }(t,s)\hat{\mathbb{E}}\left[ 1+\left\vert X(s)\right\vert ^{2+\epsilon }\right] I_{\left\{ L(t,s)>n\right\} }\mathrm{d}s \\
		\leq &\left( \sup_{t\in \lbrack 0,T]}\hat{\mathbb{E}}\left[ 1+\left\vert X(t)\right\vert ^{2+\epsilon }\right] \right) \int_{0}^{t}L^{2+\epsilon}(t,s)I_{\left\{ L(t,s)>n\right\} }\mathrm{d}s \\
		\leq &\left( \sup_{t\in \lbrack 0,T]}\hat{\mathbb{E}}\left[ 1+\left\vert X(t)\right\vert ^{2+\epsilon }\right] \right) \int_{0}^{t}\frac{L^{2+\bar{\epsilon}}(t,s)}{n^{\bar{\epsilon}-\epsilon }}\mathrm{d}s.
	\end{align*}%
	By (H2), we have 
	\begin{align*}
		&\lim_{n\rightarrow \infty }\hat{\mathbb{E}}\left[ \int_{0}^{t}\left\vert\sigma _{n}(t,s,X(s))-\sigma (t,s,X(s))\right\vert ^{2+\epsilon }\mathrm{d}s
		\right] \\
		\leq &\lim_{n\rightarrow \infty }\left( \sup_{t\in \lbrack 0,T]}\hat{\mathbb{E}}\left[ 1+\left\vert X(t)\right\vert ^{2+\epsilon}\right]\right)\int_{0}^{t}\frac{L^{2+\bar{\epsilon}}(t,s)}{n^{\bar{\epsilon}-\epsilon }}\mathrm{d}s \\
		\leq&\lim_{n\rightarrow\infty}\frac{1}{n^{\bar{\epsilon}-\epsilon }}\left( \sup_{t\in \lbrack 0,T]}\hat{\mathbb{E}}\left[ 1+\left\vert X(t)\right\vert ^{2+\epsilon }\right] \right) \left( \sup_{t\in\lbrack0,T]}\int_{0}^{T}L^{2+\bar{\epsilon}}(t,s)\mathrm{d}s\right) \\
		=&0.
	\end{align*}
	Then we obtain $\sigma (t,\cdot ,X(\cdot ))\in M_{G}^{2+\epsilon }(0,t)$, which implies $M(t)\in L_{G}^{2+\epsilon }(\Omega _{t})$.
	
	We now prove that $M(\cdot )\in M_{G}^{2+\epsilon }(0,T)$. Denote the partition of $[0,T]$ by $\Pi_{T}^{m}=\left\{0=t_{0}^{m}<...<t_{m}^{m}=T\right\} $, and let
	\begin{equation*}
		\left\Vert \Pi _{T}^{m}\right\Vert =\max \left\{ t_{i+1}^{m}-t_{i}^{m},0\leq i\leq m-1\right\} .
	\end{equation*}
	Then $\left\Vert \Pi _{T}^{m}\right\Vert \rightarrow 0$ as $m\rightarrow \infty $. Set 
	\begin{equation*}
		M_{m}(t)=\sum_{i=0}^{m-1}M(t_{i}^{m})I_{[t_{i}^{m},t_{i+1}^{m})}(t).
	\end{equation*}
	It is clear that $M_{m}(\cdot )\in M_{G}^{2}(0,T)$. For each $t\in \lbrack t_{i}^{m},t_{i+1}^{m})$, by H\"{o}lder's inequality, it follows that%
	\begin{align}
		&\hat{\mathbb{E}}\left[ \left\vert M(t)-M_{m}(t_{i}^{m})\right\vert^{2+\epsilon }\right]  \notag \\
		\leq &\hat{\mathbb{E}}\left[ \left\vert\int_{0}^{t_{i}^{m}}\sigma
		(t,s,X(s))-\sigma(t_{i}^{m},s,X(s))\mathrm{d}B_{s}+\int_{t_{i}^{m}}^{t}\sigma (t,s,X(s))\mathrm{d}B_{s}\right\vert ^{2+\epsilon }\right]  \notag \\
		\leq &C_{\bar{\sigma},\epsilon }\left\{ \hat{\mathbb{E}}\left[ \left\vert\int_{0}^{t_{i}^{m}}\left\vert \sigma (t,s,X(s))-\sigma
		(t_{i}^{m},s,X(s))\right\vert^{2}\mathrm{d}s\right\vert^{\frac{2+\epsilon }{2}}\right] +\hat{\mathbb{E}}\left[ \left\vert\int_{t_{i}^{m}}^{t}\left\vert \sigma (t,s,X(s))\right\vert ^{2}\mathrm{d}s\right\vert ^{\frac{2+\epsilon }{2}}\right] \right\}  \notag \\
		\leq &C_{\bar{\sigma},T,\epsilon }\left\{ \hat{\mathbb{E}}\left[\int_{0}^{t_{i}^{m}}\left\vert \sigma (t,s,X(s))-\sigma(t_{i}^{m},s,X(s))\right\vert ^{2+\epsilon }\mathrm{d}s\right] +\hat{\mathbb{E}}\left[ \int_{t_{i}^{m}}^{t}\left\vert \sigma (t,s,X(s))\right\vert
		^{2+\epsilon }\mathrm{d}s\right] \left\vert t-t_{i}^{m}\right\vert ^{\frac{\epsilon }{2}}\right\}  \notag \\
		\leq &C_{\bar{\sigma},T,\epsilon }\left\{ \hat{\mathbb{E}}\left[
		\int_{0}^{t_{i}^{m}}K^{2+\epsilon }(t,t_{i}^{m},s)\left( 1+\left\vert X(s)\right\vert ^{2+\epsilon }\right) \mathrm{d}s\right]+\hat{\mathbb{E}}
		\left[ \int_{t_{i}^{m}}^{t}L^{2+\epsilon }(t,s)\left( 1+\left\vert X(s)\right\vert ^{2+\epsilon }\right) \mathrm{d}s\right] \left\vert t-t_{i}^{m}\right\vert ^{\frac{\epsilon }{2}}\right\}  \notag \\
		\leq &C_{\bar{\sigma},T,\epsilon }\left\{ \int_{0}^{t_{i}^{m}}K^{2+\epsilon}(t,t_{i}^{m},s)\hat{\mathbb{E}}\left[ 1+\left\vert X(s)\right\vert^{2+\epsilon }\right] \mathrm{d}s+\left\vert t-t_{i}^{m}\right\vert ^{\frac{\epsilon }{2}}\int_{t_{i}^{m}}^{t}L^{2+\epsilon }(t,s)\hat{\mathbb{E}}\left[1+\left\vert X(s)\right\vert ^{2+\epsilon }\right] \mathrm{d}s\right\} \notag \\
		\leq &C_{\bar{\sigma},T,\epsilon }\left\{ \left( \sup_{t\in \lbrack 0,T]}\hat{\mathbb{E}}\left[ 1+\left\vert X(t)\right\vert ^{2+\epsilon }\right]\right) \int_{0}^{t_{i}^{m}}K^{2+\epsilon }(t,t_{i}^{m},s)\mathrm{d}s\right. \notag\\
		&\left. +\left\vert t-t_{i}^{m}\right\vert ^{\frac{\epsilon}{2}}\left(\int_{t_{i}^{m}}^{t}L^{2+\bar{\epsilon}}(t,s)\mathrm{d}s\right) ^{\frac{2+\epsilon }{2+\bar{\epsilon}}}\left( \int_{t_{i}^{m}}^{t}\left( \hat{\mathbb{E}}\left[ 1+\left\vert X(s)\right\vert ^{2+\epsilon }\right]\right)^{\frac{2+\bar{\epsilon}}{\bar{\epsilon}-\epsilon }}\mathrm{d}s\right) ^{\frac{\bar{\epsilon}-\epsilon }{2+\bar{\epsilon}}}\right\}  \notag \\
		\leq &C_{\bar{\sigma},T,\epsilon }\left( \sup_{t\in \lbrack 0,T]}\hat{\mathbb{E}}\left[ 1+\left\vert X(t)\right\vert ^{2+\epsilon }\right] \right)\left\{ C_{T}\rho \left( \left\vert t-t_{i}^{m}\right\vert\right)+\left(\int_{t_{i}^{m}}^{t}L^{2+\bar{\epsilon}}(t,s)\mathrm{d}s\right) ^{\frac{2+\epsilon }{2+\bar{\epsilon}}}\left\vert t-t_{i}^{m}\right\vert ^{\frac{\bar{\epsilon}\left( 2+\epsilon \right) }{2\left( 2+\bar{\epsilon}\right) }}\right\}.  \label{sigam t t'}
	\end{align}
	Thus, 
	\begin{align*}
		&\sup_{t\in \lbrack 0,T]}\hat{\mathbb{E}}\left[ \left\vert M(t)-M_{m}(t)\right\vert ^{2+\epsilon }\right] \\
		\leq&\sup_{t\in\lbrack0,T]}\sum_{i=0}^{m-1}\hat{\mathbb{E}}\left[
		\left\vert M(t)-M_{m}(t_{i}^{m})\right\vert ^{2+\epsilon }\right]
		I_{[t_{i}^{m},t_{i+1}^{m})}(t) \\
		\leq &C_{\bar{\sigma},T,\epsilon }\left( \sup_{t\in \lbrack 0,T]}\hat{\mathbb{E}}\left[ 1+\left\vert X(t)\right\vert ^{2+\epsilon }\right] \right)\left\{ C_{T}\rho \left( \left\Vert \Pi _{T}^{m}\right\Vert \right) +\left(\sup_{t\in \lbrack 0,T]}\int_{0}^{T}L^{2+\bar{\epsilon}}(t,s)\mathrm{d}s\right) ^{\frac{2+\epsilon }{2+\bar{\epsilon}}}\left\Vert \Pi_{T}^{m}\right\Vert ^{\frac{\bar{\epsilon}\left( 2+\epsilon \right) }{2\left( 2+\bar{\epsilon}\right) }}\right\} .
	\end{align*}
	It leads to 
	\begin{align*}
		&\lim_{m\rightarrow \infty }\hat{\mathbb{E}}\left[ \int_{0}^{T}\left\vert M(t)-M_{m}(t)\right\vert ^{2+\epsilon }\mathrm{d}t\right] \\
		\leq &\lim_{m\rightarrow \infty }T\sup_{t\in \lbrack 0,T]}\hat{\mathbb{E}}\left[ \left\vert M(t)-M_{m}(t)\right\vert ^{2+\epsilon }\right] \\
		\leq &\lim_{m\rightarrow \infty }C_{\bar{\sigma},T,\epsilon }\left(\sup_{t\in \lbrack 0,T]}\hat{\mathbb{E}}\left[ 1+\left\vert X(t)\right\vert^{2+\epsilon }\right] \right) \left\{ C_{T}\rho \left( \left\Vert \Pi_{T}^{m}\right\Vert \right)+\left(\sup_{t\in\lbrack0,T]}\int_{0}^{T}L^{2+\bar{\epsilon}}(t,s)\mathrm{d}s\right)^{\frac{2+\epsilon}{2+\bar{\epsilon}}}\left\Vert \Pi _{T}^{m}\right\Vert ^{\frac{\bar{\epsilon}\left( 2+\epsilon\right) }{2\left( 2+\bar{\epsilon}\right) }}\right\} \\
		=&0.
	\end{align*}%
	Then we obtain $M(\cdot )\in M_{G}^{2+\epsilon }(0,T)$. Furthermore, $M(\cdot)\in \tilde{M}_{G}^{2+\epsilon }(0,T)$. Similarly, we can prove that $N(\cdot )\in \tilde{M}_{G}^{2+\epsilon }(0,T)$.
\end{proof}

The following theorem is the main result of this section.

\begin{theorem}\label{solution} 
	Let $0=\epsilon <\bar{\epsilon}$. Assume that $\phi (\cdot)\in M_{G}^{2}(0,T)$ satisfies 
	\begin{equation*}
		\sup_{t\in \lbrack 0,T]}\hat{\mathbb{E}}[|\phi (t)|^{2}]<\infty .
	\end{equation*}%
	If (H1)-(H3) hold, then there exists a unique solution $X(\cdot )\in
	M_{G}^{2}(0,T)$ of (\ref{G-SVIE}) satisfies $\sup\limits_{t\in \lbrack 0,T]}%
	\hat{\mathbb{E}}[|X(t)|^{2}]<\infty $. Moreover, $X(\cdot )-\phi (\cdot )\in 
	\tilde{M}_{G}^{2}(0,T)$ and is mean-square continuous.
\end{theorem}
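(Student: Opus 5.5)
We construct the solution by Picard iteration in the space
\[
\mathcal{S}:=\Big\{X(\cdot)\in M_G^2(0,T):\ \sup_{t\in[0,T]}\hat{\mathbb{E}}[|X(t)|^2]<\infty\Big\},
\]
with $\epsilon=0$ throughout. Set $X_0(t):=\phi(t)$ and
\[
X_{n+1}(t)=\phi(t)+\int_0^t b(t,s,X_n(s))\mathrm{d}s+\int_0^t h(t,s,X_n(s))\mathrm{d}\langle B\rangle(s)+\int_0^t\sigma(t,s,X_n(s))\mathrm{d}B(s).
\]
By Lemma \ref{sigma MG}, if $X_n\in\mathcal{S}$, then $X_{n+1}-\phi=M+N\in\tilde M_G^2(0,T)$, so $X_{n+1}\in M_G^2(0,T)$. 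To see that $\sup_t\hat{\mathbb{E}}[|X_{n+1}(t)|^2]<\infty$, we use (\ref{ito}) and Cauchy--Schwarz for the $\mathrm{d}s$ and $\mathrm{d}\langle B\rangle$ terms (with $\mathrm{d}\langle B\rangle\le\bar\sigma^2\mathrm{d}s$). Together with the linear growth in (H2), this gives
\[
\hat{\mathbb{E}}[|X_{n+1}(t)|^2]\le C\,\hat{\mathbb{E}}[|\phi(t)|^2]+C_{T,\bar\sigma}\int_0^t L^2(t,s)\big(1+\hat{\mathbb{E}}[|X_n(s)|^2]\big)\mathrm{d}s .
\]
Here the $\mathrm{d}s$ integral of a sublinear expectation dominates the sublinear expectation of the integral. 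Since $\sup_t\int_0^T L^2(t,s)\mathrm{d}s\le C\big(\sup_t\int_0^T L^{2+\bar\epsilon}\mathrm{d}s\big)^{2/(2+\bar\epsilon)}<\infty$ by Hölder, each iterate lies in $\mathcal{S}$.

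For convergence, put $u_n(t):=\hat{\mathbb{E}}[|X_{n+1}(t)-X_n(t)|^2]$. The same estimates with the Lipschitz part of (H2) give
\[
u_n(t)\le C\int_0^t L^2(t,s)\,u_{n-1}(s)\,\mathrm{d}s .
\]
The kernel $L^2(t,s)$ is not bounded, so the usual factorial bound must be replaced. I apply Hölder with exponents $p=\frac{2+\bar\epsilon}{2}$ and $q=\frac{2+\bar\epsilon}{\bar\epsilon}$ to obtain
\[
u_n(t)\le C_1\Big(\int_0^t u_{n-1}(s)^q\,\mathrm{d}s\Big)^{1/q}, \qquad\text{i.e.}\qquad u_n(t)^q\le C_1^q\int_0^t u_{n-1}(s)^q\,\mathrm{d}s .
\]
Iterating yields $u_n(t)^q\le \frac{(C_1^qt)^n}{n!}\sup_s u_0(s)^q$, so $\sum_n \sup_t u_n(t)^{1/2}<\infty$. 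Hence $(X_n)$ is Cauchy both in $M_G^2(0,T)$ (because $\hat{\mathbb{E}}\int_0^T\le\int_0^T\hat{\mathbb{E}}$) and in the norm $\sup_t\hat{\mathbb{E}}[|\cdot|^2]^{1/2}$. Let $X$ be the limit in $\mathcal{S}$. Passing to the limit in the equation for each fixed $t$, again via (\ref{ito}) and (H2), shows that $X$ solves (\ref{G-SVIE}).

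Uniqueness follows the same way. For two solutions in $\mathcal{S}$, $u(t):=\hat{\mathbb{E}}[|X(t)-Y(t)|^2]$ is bounded and satisfies $u(t)^q\le C\int_0^t u(s)^q\,\mathrm{d}s$, so Gronwall gives $u\equiv0$. Lemma \ref{sigma MG} applied to the solution $X$ itself gives $X-\phi\in\tilde M_G^2(0,T)$.

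For mean-square continuity, take $t'\le t$ and split
\[
(X-\phi)(t)-(X-\phi)(t')=\int_0^{t'}\big[\sigma(t,s,X(s))-\sigma(t',s,X(s))\big]\mathrm{d}B(s)+\int_{t'}^t\sigma(t,s,X(s))\mathrm{d}B(s),
\]
together with the analogous terms for $b$ and $h$. This is exactly estimate (\ref{sigam t t'}) with $\epsilon=0$, which is bounded by $C\big(\rho(|t-t'|)+|t-t'|^{\bar\epsilon/(2+\bar\epsilon)}\big)$ and tends to $0$ as $t'\to t$.

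The main obstacle is the contraction step with the singular, time-dependent kernel $L(t,s)$: a naive Gronwall inequality does not apply because $L^2(t,\cdot)$ is merely integrable. The Hölder trick that uses the extra integrability $\bar\epsilon>0$ is what I would try first. A secondary point needing care is whether $\hat{\mathbb{E}}$ commutes with the $\mathrm{d}s$ integral; only the inequality $\hat{\mathbb{E}}\int\le\int\hat{\mathbb{E}}$ is needed, and it holds by sublinearity together with the representation in Theorem \ref{thm2.1}.
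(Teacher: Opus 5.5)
Your proposal is correct and follows essentially the same route as the paper. The shared steps are Picard iteration among processes with finite $\sup_t\hat{\mathbb{E}}[|\cdot|^2]$, and the H\"older split with exponents $\frac{2+\bar\epsilon}{2}$ and $\frac{2+\bar\epsilon}{\bar\epsilon}$ that gives the factorial bound on $u_n^q$. The paper also uses Gronwall on $u^q$ for uniqueness and estimate (\ref{sigam t t'}) with $\epsilon=0$ for mean-square continuity. Your explicit appeal to Lemma \ref{sigma MG} at each Picard step to get $X_{n+1}\in M_G^2(0,T)$ is slightly more careful than the paper, which infers that membership from the second-moment bound alone.
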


\begin{proof}[Proof]
	Existence. Let $X_{0}(t)=\phi (t)$ and let 
	\begin{equation}
		X_{n}(t)=\phi(t)+\int_{0}^{t}b(t,s,X_{n-1}(s))\mathrm{d}s+\int_{0}^{t}h(t,s,X_{n-1}(s))\mathrm{d}\left\langle B\right\rangle(s)+\int_{0}^{t}\sigma(t,s,X_{n-1}(s))\mathrm{d}B(s)  \label{Xn}
	\end{equation}
	where $n=1,2,\cdots $. If $X_{n-1}(\cdot )\in M_{G}^{2}[0,T]$ and $%
	\sup\limits_{t\in\lbrack0,T]}\hat{\mathbb{E}}[|X_{n-1}(t)|^{2}]<\infty $, then by Theorem \ref{BDG}, we have 
	\begin{align}
		\hat{\mathbb{E}}[|X_{n}(t)|^{2}]=& \hat{\mathbb{E}}\left[ \left\vert\phi(t)+\int_{0}^{t}b(t,s,X_{n-1}(s))\mathrm{d}s+\int_{0}^{t}h(t,s,X_{n-1}(s))\mathrm{d}\langle B\rangle (s)+\int_{0}^{t}\sigma (t,s,X_{n-1}(s))\mathrm{d}B(s)\right\vert ^{2}\right]   \notag \\
		\leq&C\left(\hat{\mathbb{E}}[|\phi(t)|^{2}]+\hat{\mathbb{E}}\left[\left\vert \int_{0}^{t}b(t,s,X_{n-1}(s))\mathrm{d}s\right\vert ^{2}\right] +\hat{\mathbb{E}}\left[ \left\vert \int_{0}^{t}h(t,s,X_{n-1}(s))\mathrm{d}\langle B\rangle (s)\right\vert ^{2}\right] \right.   \notag \\
		& \left. +\hat{\mathbb{E}}\left[ \left\vert \int_{0}^{t}\sigma
		(t,s,X_{n-1}(s))\mathrm{d}B(s)\right\vert ^{2}\right] \right)   \notag \\
		\leq & C\left( \hat{\mathbb{E}}[|\phi (t)|^{2}]+\left( (\bar{\sigma}^{4}+1)T+\bar{\sigma}^{2}\right) \hat{\mathbb{E}}\left[\int_{0}^{t}L^{2}(t,s)(1+|X_{n-1}(s)|^{2})\mathrm{d}s\right] \right)   \notag\\
		\leq & C\left( \sup_{t\in \lbrack 0,T]}\hat{\mathbb{E}}[|\phi
		(t)|^{2}]+\left( (\bar{\sigma}^{4}+1)T+\bar{\sigma}^{2}\right) \left(\sup_{t\in\lbrack0,T]}\int_{0}^{T}L^{2}(t,s)\mathrm{d}s\right) \left(\sup_{t\in \lbrack 0,T]}\hat{\mathbb{E}}[1+|X_{n-1}(t)|^{2}]\right) \right) 
		\notag \\
		<& \infty .  \label{supXn}
	\end{align}
	Thus $X_{n}(\cdot )\in M_{G}^{2}[0,T]$ and $\sup\limits_{t\in \lbrack 0,T]}\hat{\mathbb{E}}[|X_{n}(t)|^{2}]<\infty $. Since $X_{0}(\cdot )\in M_{G}^{2}[0,T]$ and $\sup\limits_{t\in \lbrack 0,T]}\hat{\mathbb{E}}[|X_{0}(t)|^{2}]<\infty $, it follows that $X_{n}(\cdot )\in M_{G}^{2}[0,T]$ and $\sup\limits_{t\in \lbrack 0,T]}\hat{\mathbb{E}}[|x_{n}(t)|^{2}]<\infty $
	for $n=1,2,\cdots $. Denote 
	\begin{equation*}
		\hat{b}(x-y):=b(t,s,x)-b(t,s,y),\ \hat{h}(x-y):=h(t,s,x)-h(t,s,y),\ \hat{\sigma}(x-y):=\sigma (t,s,x)-\sigma (t,s,y).
	\end{equation*}
	For convenience, set $\left( \sup_{t\in \lbrack 0,T]}\int_{0}^{T}L^{2+\bar{\epsilon}}(t,s)\mathrm{d}s\right) ^{\frac{2}{2+\bar{\epsilon}}}=L_{\bar{\epsilon}}$. Therefore, 
	\begin{align*}
		&\hat{\mathbb{E}}[|X_{n+1}(t)-X_{n}(t)|^{2}]\\
		=& \hat{\mathbb{E}}\left[\left\vert\int_{0}^{t}\hat{b}(X_{n}(s)-X_{n-1}(s))\mathrm{d}s+\int_{0}^{t}\hat{h}(X_{n}(s)-X_{n-1}(s))\mathrm{d}\langle B\rangle(s)\int_{0}^{t}\hat{\sigma}(X_{n}(s)-X_{n-1}(s))\mathrm{d}B(s)\right\vert ^{2}\right]  \\
		\leq & C\left( \hat{\mathbb{E}}\left[ \left\vert \int_{0}^{t}\hat{b}(X_{n}(s)-X_{n-1}(s))\mathrm{d}s\right\vert ^{2}\right] +\hat{\mathbb{E}}
		\left[\left\vert\int_{0}^{t}\hat{h}(X_{n}(s)-X_{n-1}(s))\mathrm{d}\langle B\rangle (s)\right\vert ^{2}\right] \right.  \\
		& +\left. \hat{\mathbb{E}}\left[ \left\vert \int_{0}^{t}\hat{\sigma}(X_{n}(s)-X_{n-1}(s))\mathrm{d}B(s)\right\vert ^{2}\right] \right)  \\
		\leq & C\left( (\bar{\sigma}^{4}+1)T+\bar{\sigma}^{2}\right)
		\int_{0}^{t}L^{2}(t,s)\hat{\mathbb{E}}[|X_{n}(s)-X_{n-1}(s)|^{2}]\mathrm{d}s\\
		\leq& C_{\bar{\sigma},T}L_{\bar{\epsilon}}\left( \int_{0}^{t}\left(\hat{\mathbb{E}}[|X_{n}(s)-X_{n-1}(s)|^{2}]\right) ^{\frac{2+\bar{\epsilon}}{\bar{\epsilon}}}\mathrm{d}s\right) ^{\frac{\bar{\epsilon}}{2+\bar{\epsilon}}}.
	\end{align*}
	Note that 
	\begin{align*}
		\hat{\mathbb{E}}[|X_{1}(t)-X_{0}(t)|^{2}]=&\hat{\mathbb{E}}\left[
		\left\vert\int_{0}^{t}b(t,s,X_{0}(s))\mathrm{d}s+\int_{0}^{t}h(t,s,X_{0}(s))\mathrm{d}\langle B\rangle (s)+\int_{0}^{t}\sigma (t,s,X_{0}(s))\mathrm{d}B(s)\right\vert ^{2}\right]  \\
		\leq&C\left((\bar{\sigma}^{4}+1)T+\bar{\sigma}^{2}\right)\int_{0}^{t}L^{2}(t,s)\hat{\mathbb{E}}[1+|\phi (s)|^{2}]\mathrm{d}s \\
		\leq & C_{\bar{\sigma},T}L_{\bar{\epsilon}}\left( \int_{0}^{t}(\hat{\mathbb{E}}[1+|\phi(s)|^{2}])^{\frac{2+\bar{\epsilon}}{\bar{\epsilon}}}\mathrm{d}s\right) ^{\frac{\bar{\epsilon}}{2+\bar{\epsilon}}} \\
		\leq & C_{\bar{\sigma},T}L_{\bar{\epsilon}}\left( \sup_{t\in \lbrack 0,T]}\hat{\mathbb{E}}[1+|\phi (t)|^{2}]\right) t^{\frac{\bar{\epsilon}}{2+\bar{\epsilon}}}.
	\end{align*}
	Then we have 
	\begin{equation*}
		\hat{\mathbb{E}}[|X_{n+1}(t)-X_{n}(t)|^{2}]\leq \left( \sup_{t\in \lbrack0,T]}\hat{\mathbb{E}}[1+|\phi (t)|^{2}]\right) (C_{\bar{\sigma},T}L_{\bar{\epsilon}})^{n+1}\left( \frac{t^{n+1}}{(n+1)!}\right)^{\frac{\bar{\epsilon}}{2+\bar{\epsilon}}}.
	\end{equation*}%
	For $n>m\geq 1$, by H\"{o}lder's inequality, we obtain 
	\begin{align*}
		\hat{\mathbb{E}}[|X_{n}(t)-X_{m}(t)|^{2}]=& \hat{\mathbb{E}}
		[|X_{n}(t)-X_{n-1}(t)+\cdots +X_{m+1}(t)-X_{m}(t)|^{2}] \\
		\leq&\left((\hat{\mathbb{E}}[|X_{n}(t)-X_{n-1}(t)|^{2}])^{\frac{1}{2}}+\cdots+(\hat{\mathbb{E}}[|X_{m+1}(t)-X_{m}(t)|^{2}])^{\frac{1}{2}}\right)^{2} \\
		\leq&\left(\sum_{k=m+1}^{n}(C_{\bar{\sigma},T}L_{\bar{\epsilon}})^{\frac{k}{2}}\left( \frac{t^{k}}{k!}\right) ^{\frac{\bar{\epsilon}}{4+2\bar{\epsilon}}}\right) ^{2}\sup_{t\in \lbrack 0,T]}\hat{\mathbb{E}}[1+|\phi
		(t)|^{2}].
	\end{align*}
	Since the series $\sum_{k=1}^{\infty}(C_{\bar{\sigma},T}L_{\bar{\epsilon}})^{\frac{k}{2}}\left(\frac{T^{k}}{k!}\right)^{\frac{\bar{\epsilon}}{4+2\bar{\epsilon}}}$ is convergent, we get 
	\begin{equation*}
		\sup_{t\in\lbrack0,T]}\hat{\mathbb{E}}[|X_{n}(t)-X_{m}(t)|^{2}]\leq\left(\sum_{k=m+1}^{n}(C_{\bar{\sigma},T}L_{\bar{\epsilon}})^{\frac{k}{2}}\left( \frac{T^{k}}{k!}\right) ^{\frac{\bar{\epsilon}}{4+2\bar{\epsilon}}}\right)^{2}\sup_{t\in \lbrack 0,T]}\hat{\mathbb{E}}[1+|\phi (t)|^{2}]\rightarrow
		0\quad \text{as }n,m\rightarrow \infty .
	\end{equation*}
	Then we obtain $\left( X_{n}\right) _{n\geq 0}$ is a Cauchy sequence under the norm $\left( \sup_{t\in \lbrack0,T]}\hat{\mathbb{E}}[|\cdot
	|^{2}]\right) ^{\frac{1}{2}}$. Note that $\Vert X_{n}\Vert_{M_{G}^{2}}\leq T^{\frac{1}{2}}(\sup\limits_{t\in \lbrack 0,T]}\hat{\mathbb{E}}[|X_{n}(t)|^{2}])^{\frac{1}{2}}$. Therefore, we can find $X(\cdot )$ $\in M_{G}^{2}[0,T]$ such that
	\begin{equation}
		\lim_{n\rightarrow \infty }\sup\limits_{t\in \lbrack 0,T]}\hat{\mathbb{E}}\left[ |X(t)-X_{n}(t)|^{2}\right] =0.  \label{xn-x}
	\end{equation}
	Next, we need to prove 
	\begin{equation}
		\lim_{n\rightarrow \infty }\hat{\mathbb{E}}\left[ \left\vert
		\int_{0}^{t}\sigma (t,s,X(s))-\sigma (t,s,X_{n-1}(s))\mathrm{d}%
		B(s)\right\vert ^{2}\right] =0.  \label{sigma xn}
	\end{equation}
	From (H2), it leads to
	\begin{align*}
		&\hat{\mathbb{E}}\left[ \left\vert \int_{0}^{t}\sigma (t,s,X(s))-\sigma(t,s,X_{n-1}(s))\mathrm{d}B(s)\right\vert ^{2}\right]  \\
		\leq&\bar{\sigma}^{2}\hat{\mathbb{E}}\left[\int_{0}^{t}\left\vert \sigma(t,s,X(s))-\sigma (t,s,X_{n-1}(s))\right\vert ^{2}\mathrm{d}s\right]  \\
		\leq&\bar{\sigma}^{2}\hat{\mathbb{E}}\left[\int_{0}^{t}L^{2}(t,s)\left\vert X(s)-X_{n-1}(s)\right\vert ^{2}\mathrm{d}s\right]  \\
		\leq&\bar{\sigma}^{2}\int_{0}^{t}L^{2}(t,s)\hat{\mathbb{E}}\left[
		\left\vert X(s)-X_{n-1}(s)\right\vert ^{2}\right] \mathrm{d}s \\
		\leq&\bar{\sigma}^{2}\left(\sup\limits_{t\in\lbrack0,T]}\hat{\mathbb{E}}\left[|X(t)-X_{n-1}(t)|^{2}\right]\right)\int_{0}^{t}L^{2}(t,s)\mathrm{d}s.
	\end{align*}%
	Due to (\ref{xn-x}), we obtain that (\ref{sigma xn}) holds. Similarly, we can prove that
	\begin{align*}
		&\lim_{n\rightarrow \infty }\hat{\mathbb{E}}\left[ \left\vert
		\int_{0}^{t}b(t,s,X_{n-1}(s))\mathrm{d}s+\int_{0}^{t}h(t,s,X_{n-1}(s))\mathrm{d}\langle B\rangle (s)\right\vert ^{2}\right]\\ =&\hat{\mathbb{E}}\left[\left\vert\int_{0}^{t}b(t,s,X(s))\mathrm{d}s+\int_{0}^{t}h(t,s,X(s))\mathrm{d}\left\langle B\right\rangle
		(s)\right\vert ^{2}\right] .
	\end{align*}
	Let $n\rightarrow \infty $ on both side of (\ref{Xn}), we obtain that $X$ is a solution of equation (\ref{G-SVIE}) and $\sup\limits_{t\in \lbrack 0,T]}\hat{\mathbb{E}}[|X(t)|^{2}]<\infty $. Moreover, due to Lemma \ref{sigma MG}, we obtain that $X(\cdot )-\phi (\cdot )\in \tilde{M}_{G}^{2}(0,T)$.
	
	Uniqueness. Let $X(\cdot ),\ Y(\cdot )\in M_{G}^{2}(0,T)$ be two solutions of equation (\ref{G-SVIE}) and 
	\begin{equation*}
		\sup\limits_{t\in \lbrack 0,T]}\hat{\mathbb{E}}[|X(t)|^{2}]<\infty ,\
		\sup\limits_{t\in \lbrack 0,T]}\hat{\mathbb{E}}[|Y(t)|^{2}]<\infty .
	\end{equation*}%
	By H\"{o}lder's inequality, it follows that 
	\begin{align*}
		\left(\hat{\mathbb{E}}[|X(t)-Y(t)|^{2}]\right)^{\frac{2+\bar{\epsilon}}{\bar{\epsilon}}}=& \left( \hat{\mathbb{E}}\left[ \left\vert\int_{0}^{t}\hat{b}(X(s)-Y(s))\mathrm{d}s+\int_{0}^{t}\hat{h}(X(s)-Y(s))\mathrm{d}\langle B\rangle (s)\right. \right. \right.  \\
		&+\left.\left.\left.\int_{0}^{t}\hat{\sigma}(X(s)-Y(s))\mathrm{d}B(s)\right\vert^{2}\right]\right)^{\frac{2+\bar{\epsilon}}{\bar{\epsilon}}} \\
		\leq & \left(C\left((\bar{\sigma}^{4}+1)T+\bar{\sigma}^{2}\right)
		\int_{0}^{t}L^{2}(t,s)\hat{\mathbb{E}}[|X(s)-Y(s)|^{2}]\mathrm{d}s\right)^{\frac{2+\bar{\epsilon}}{\bar{\epsilon}}} \\
		\leq&C_{\bar{\sigma},T}\left(\int_{0}^{t}L^{2+\bar{\epsilon}}(t,s)\mathrm{d}s\right)^{\frac{2}{\bar{\epsilon}}}\int_{0}^{t}\left( \hat{\mathbb{E}}[|X(s)-Y(s)|^{2}]\right)^{\frac{2+\bar{\epsilon}}{\bar{\epsilon}}}\mathrm{d}s \\
		\leq&C_{\bar{\sigma},T}L_{\bar{\epsilon}}^{\frac{2+\bar{\epsilon}}{\bar{\epsilon}}}\int_{0}^{t}\left(\hat{\mathbb{E}}[|X(s)-Y(s)|^{2}]\right) ^{\frac{2+\bar{\epsilon}}{\bar{\epsilon}}}\mathrm{d}s.
	\end{align*}
	Define $u(t)=\sup_{0\leq s\leq t}\left(\hat{\mathbb{E}}[|X(s)-Y(s)|^{2}]\right)^{\frac{2+\bar{\epsilon}}{\bar{\epsilon}}}$ for $t\in \lbrack 0,T]$. It is easy to check that
	\begin{align*}
		u(t)\leq&C_{\bar{\sigma},T}L_{\bar{\epsilon}}^{\frac{2+\bar{\epsilon}}{\bar{\epsilon}}}\int_{0}^{t}\left(\hat{\mathbb{E}}[|X(s)-Y(s)|^{2}]\right)^{\frac{2+\bar{\epsilon}}{\bar{\epsilon}}}\mathrm{d}s \\
		\leq&C_{\bar{\sigma},T}L_{\bar{\epsilon}}^{\frac{2+\bar{\epsilon}}{\bar{\epsilon}}}\int_{0}^{t}u(s)\mathrm{d}s.
	\end{align*}
	\ Using Gronwall's inequality, we have 
	\begin{equation*}
		u(t)=0, \ t\in[0,T],
	\end{equation*}
	which implies 
	\begin{equation*}
		\sup_{t\in \lbrack 0,T]}\hat{\mathbb{E}}[|X(t)-Y(t)|^{2}]=0.
	\end{equation*}
	Therefore,  
	\begin{equation*}
		\Vert X-Y\Vert _{M_{G}^{2}}\leq T^{\frac{1}{2}}(\sup\limits_{t\in \lbrack0,T]}\hat{\mathbb{E}}[|X(t)-Y(t)|^{2}])^{\frac{1}{2}}=0.
	\end{equation*}
	
	Continuity. We first proof the process $\int_{0}^{t}\sigma (t,s,X(s))\mathrm{d}B(s),\ t\in \lbrack 0,T]$, is mean-square continuous with respect to $t$. For simplicity, we may take $t>t^{\prime }$. Since $\sup\limits_{t\in
	\lbrack 0,T]}\hat{\mathbb{E}}[|X(t)|^{2}]<\infty $, according to (\ref{sigam t t'}), we have
	\begin{align*}
		&\hat{\mathbb{E}}\left[ \left\vert \int_{0}^{t}\sigma (t,s,X(s))\mathrm{d}s-\int_{0}^{t^{\prime }}\sigma (t^{\prime },s,X(s))\mathrm{d}B(s)\right\vert^{2}\right]  \\
		\leq &2\bar{\sigma}^{2}\left( \sup_{t\in \lbrack 0,T]}\hat{\mathbb{E}}\left[1+\left\vert X(t)\right\vert ^{2}\right] \right) \left\{ C_{T}\rho \left(\left\vert t-t^{\prime}\right\vert\right)+\left(\int_{t^{\prime}}^{t}L^{2+\bar{\epsilon}}(t,s)\mathrm{d}s\right) ^{\frac{2}{2+\bar{\epsilon}%
		}}\left\vert t-t^{\prime}\right\vert^{\frac{\bar{\epsilon}}{2+\bar{\epsilon
		}}}\right\}  \\
		\leq&2\bar{\sigma}^{2}\left(\sup_{t\in\lbrack0,T]}\hat{\mathbb{E}}\left[1+\left\vert X(t)\right\vert ^{2}\right] \right) \left\{ C_{T}\rho \left(\left\vert t-t^{\prime }\right\vert \right) +L_{\bar{\epsilon}}\left\vert t-t^{\prime }\right\vert ^{\frac{\bar{\epsilon}}{2+\bar{\epsilon}}}\right\} .
	\end{align*}
	It follows that
	\begin{equation*}
		\lim_{t\rightarrow t^{\prime }}{}\hat{\mathbb{E}}\left[ \left\vert\int_{0}^{t}\sigma(t,s,X(s))\mathrm{d}s-\int_{0}^{t^{\prime }}\sigma(t^{\prime },s,X(s))\mathrm{d}B(s)\right\vert ^{2}\right] =0
	\end{equation*}
	Similarly, we can get $\int_{0}^{t}b(t,s,X(s))\mathrm{d}s+\int_{0}^{t}h(t,s,X(s))\mathrm{d}\langle B\rangle (s)$ is mean-square continuous. The proof is complete.
\end{proof}

\begin{remark}
	In the above theorem, if $\phi $ still belongs to $\tilde{M}_{G}^{2}(0,T)$, then $X(\cdot )\in\tilde{M}_{G}^{2}(0,T)$. In addition, if the conditions for $\phi $ are changed to $\phi (\cdot )\in \tilde{M}_{G}^{2}(0,T)$ and is mean-square continuous, then according to Lemma 3.4 in \cite{zhao2025G-SVIE}, we know
	that $\sup_{t\in \lbrack 0,T]}\hat{\mathbb{E}}[|\phi(t)|^{2}]<\infty $. In this case, $X(\cdot )\in \tilde{M}_{G}^{2}(0,T)$ and is mean-square continuous.
\end{remark}

Next, we present the assumptions required for the path continuity of the
solution.

\begin{description}
	\item[(H4)] Let $t_{1}\geq t_{2}$. For all $x\in \mathbb{R}$ and $(t_{1},s)\in \bigtriangleup (t_{1},s),(t_{2},s)\in \bigtriangleup
	(t_{2},s)$, there exists a deterministic positive function $\bar{K}(t_{1},t_{2},s)$ such that 
	\begin{equation*}
		\left\vert \sigma (t_{1},s,x)-\sigma (t_{2},s,x)\right\vert ^{2+\epsilon}\leq \bar{K}^{2+\epsilon }(t_{1},t_{2},s)\left( 1+\left\vert x\right\vert^{2+\epsilon }\right) .
	\end{equation*}
	Here $\bar{K}(t_{1},t_{2},s)$ satisfies $\int_{0}^{t_{2}}\bar{K}^{2+\epsilon}(t_{1},t_{2},s)\mathrm{d}s\leq C_{T}\left\vert t_{1}-t_{2}\right\vert^{\alpha }$ where $\alpha >1$.
\end{description}

\begin{theorem}
	Let $0<\epsilon <\bar{\epsilon}$. Assume that $\phi (\cdot )\in
	M_{G}^{2+\epsilon }(0,T)$ satisfies 
	\begin{equation*}
		\sup_{t\in \lbrack 0,T]}\hat{\mathbb{E}}[|\phi (t)|^{2+\epsilon }]<\infty .
	\end{equation*}
	If (H1)-(H3) hold, then there exists a unique solution $X(\cdot )\in
	M_{G}^{2+\epsilon }(0,T)$ of (\ref{G-SVIE}) satisfies $\sup\limits_{t\in\lbrack 0,T]}\hat{\mathbb{E}}[|X(t)|^{2+\epsilon }]<\infty $ and $X(\cdot)-\phi (\cdot )\in \tilde{M}_{G}^{2+\epsilon }(0,T)$. Moreover, if (H4)
	holds and $\bar{\epsilon}\epsilon >4$, then $X(\cdot )-\phi (\cdot )$ has a continuous modification.
\end{theorem}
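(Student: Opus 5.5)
The first part is the $L^{2+\epsilon}$ analogue of Theorem \ref{solution}, and I will repeat its Picard scheme with exponent $p:=2+\epsilon$ in place of $2$. Set $X_0=\phi$ and define $X_n$ by (\ref{Xn}). If $X_{n-1}\in M_G^{p}(0,T)$ with $\sup_t\hat{\mathbb{E}}[|X_{n-1}(t)|^{p}]<\infty$, then Lemma \ref{sigma MG} gives $X_n-\phi\in\tilde M_G^{p}(0,T)$.

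To bound the moments of the increments, I estimate the $\mathrm{d}B$ term with Theorem \ref{BDG} and the $\mathrm{d}s$ and $\mathrm{d}\langle B\rangle$ terms with H\"older's inequality and $\mathrm{d}\langle B\rangle\le\bar\sigma^2\mathrm{d}s$. This gives
\[
\hat{\mathbb{E}}[|X_{n+1}(t)-X_n(t)|^{p}]\le C_{\bar\sigma,T,\epsilon}\int_0^tL^{p}(t,s)\,\hat{\mathbb{E}}[|X_n(s)-X_{n-1}(s)|^{p}]\,\mathrm{d}s .
\]
I then apply H\"older in $s$ with exponents $\frac{2+\bar\epsilon}{2+\epsilon}$ and $\frac{2+\bar\epsilon}{\bar\epsilon-\epsilon}$, which turns $\int L^{p}$ into $(\sup_t\int_0^TL^{2+\bar\epsilon})^{p/(2+\bar\epsilon)}<\infty$. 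Iterating as in Theorem \ref{solution} produces factorial-type bounds of the form $C^{n}(t^n/n!)^{(\bar\epsilon-\epsilon)/(2+\bar\epsilon)}$, so $(X_n)$ is Cauchy in $\sup_t\hat{\mathbb{E}}[|\cdot|^{p}]^{1/p}$. Passing to the limit in each integral uses the Lipschitz part of (H2). Uniqueness follows from the same H\"older step applied to $u(t)=\sup_{s\le t}(\hat{\mathbb{E}}|X-Y|^{p})^{(2+\bar\epsilon)/(\bar\epsilon-\epsilon)}$ together with Gronwall. Finally, $X-\phi\in\tilde M_G^{p}$ follows from Lemma \ref{sigma MG}.

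For the continuous modification I plan to apply Theorem \ref{continuous} to $Y:=X-\phi=N+M$, with $N$ and $M$ as in Lemma \ref{sigma MG}, using the exponent $p=2+\epsilon$. The goal is a bound $\hat{\mathbb{E}}|Y(t)-Y(t')|^{p}\le c|t-t'|^{1+\delta}$ for some $\delta>0$. For $t>t'$, I split $M(t)-M(t')$ into
\[
\int_0^{t'}\bigl(\sigma(t,s,X)-\sigma(t',s,X)\bigr)\mathrm{d}B(s)+\int_{t'}^{t}\sigma(t,s,X)\,\mathrm{d}B(s).
\]
For the first piece, BDG, H\"older and (H4) give a bound $C\,\sup\hat{\mathbb{E}}[1+|X|^{p}]\,|t-t'|^{\alpha}$, and $\alpha>1$ is exactly what is needed. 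The second piece is the computation in (\ref{sigam t t'}), which gives the exponent
\[
\frac{\epsilon}{2}+\frac{\bar\epsilon-\epsilon}{2+\bar\epsilon}=\frac{\bar\epsilon(2+\epsilon)}{2(2+\bar\epsilon)} .
\]
This exceeds $1$ iff $\bar\epsilon\epsilon>4$, which is precisely the stated hypothesis.

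The main obstacle is the drift part $N$, since (H3) controls $b$ and $h$ in $t$ only through a general modulus $\rho$, not through a power. For the piece $\int_{t'}^{t}$ I expect the exponent to be even better, because H\"older gives $|t-t'|^{p-1}$ times an $L^{2+\bar\epsilon}$-tail, which is fine. The piece $\int_0^{t'}(b(t,\cdot)-b(t',\cdot))$ is the difficulty. I would first try to handle it by noting that $N$, being a Lebesgue/$\langle B\rangle$-integral, needs only to be checked separately for path continuity, using (H3) pathwise. If that fails, I would read (H4) as intended to cover $b$ and $h$ as well, with the analogous $|t-t'|^\alpha$ bound, and in that case the same estimate as for $\sigma$ applies. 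Combining the two pieces then lets Theorem \ref{continuous} deliver a H\"older continuous modification of $X-\phi$.
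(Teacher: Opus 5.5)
Your Picard scheme in $L^{2+\epsilon}$ and your treatment of the stochastic-integral part $M$ are exactly the paper's argument. For $M$ you use BDG and (H4) for $\int_0^{t'}$, the exponent $\frac{\bar\epsilon(2+\epsilon)}{2(2+\bar\epsilon)}$ for $\int_{t'}^{t}$, and then Theorem~\ref{continuous}.

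The part that is not yet a proof is the drift $N$. There you stop at ``I would first try'' and offer, as a fallback, a reading of (H4) that also covers $b$ and $h$. That fallback changes the hypotheses and must be dropped. It is also unnecessary: the pathwise route you name first is precisely what the paper does, and it closes easily.

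From (H3) and (H2) we have $|b(t,s,x)-b(t',s,x)|\le K(t,t',s)(1+|x|)$ and $|b(t,s,x)|\le L(t,s)(1+|x|)$. So for $t>t'$ and every $\omega$, H\"older's inequality together with $\int_0^{t'}K^{2+\epsilon}(t,t',s)\mathrm{d}s\le C_T\rho(|t-t'|)$ gives
\begin{align*}
\left|\int_0^t b(t,s,X(s))\mathrm{d}s-\int_0^{t'}b(t',s,X(s))\mathrm{d}s\right|\le C\,\Lambda(\omega)\left(\rho^{\frac{1}{2+\epsilon}}(|t-t'|)+|t-t'|^{\frac{\epsilon}{2(2+\epsilon)}}\right).
\end{align*}
Here $\Lambda(\omega)=\left(\int_0^T(1+|X(s)|)^{2+\epsilon}\mathrm{d}s\right)^{\frac{1}{2+\epsilon}}$, and $C$ depends on $T$, $\epsilon$ and $\sup_t\int_0^TL^2(t,s)\mathrm{d}s<\infty$. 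Since $X\in M_G^{2+\epsilon}(0,T)$, we have $\Lambda<\infty$ q.s. Because $\Lambda$ does not depend on $t$, only a single polar set is discarded. The $h$-term is handled identically, using $\mathrm{d}\langle B\rangle\le\bar\sigma^2\mathrm{d}s$ q.s.

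So the argument is asymmetric. Theorem~\ref{continuous} is applied to $M$ alone, giving a modification $\tilde M$. For $N$, (H3) only yields $\hat{\mathbb{E}}[|N(t)-N(t')|^{2+\epsilon}]\le C\rho(|t-t'|)+\cdots$, which need not satisfy that theorem's hypothesis, so your plan of applying Theorem~\ref{continuous} to $Y=N+M$ as a whole cannot work. Instead, $N$ already has continuous paths q.s., and $\tilde M+N$ is the required continuous modification of $X-\phi$.

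For the same reason, your closing claim of a \emph{H\"older} continuous modification of $X-\phi$ is too strong under the stated assumptions. With a general modulus $\rho$ only continuity follows, which is all the theorem asserts.
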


\begin{proof}[Proof]
	First, we need to prove that there exists $X(\cdot )\in M_{G}^{2+\epsilon}(0,T)$ such that $\sup\limits_{t\in \lbrack 0,T]}\hat{\mathbb{E}}
	[|X(t)|^{2+\epsilon }]<\infty $ and $X(\cdot )-\phi (\cdot )\in \tilde{M}_{G}^{2+\epsilon }(0,T)$. This statement can be proved similarly to the method employed in the proof of Theorem \ref{solution}.
	
	Now we prove the path continuity of $\int_{0}^{t}\sigma(t,s,X(s))\mathrm{d}B(s),\ t\in \lbrack 0,T]$. Without loss of generality, we consider $t>\bar{t}$. By Theorem \ref{BDG} and H\"{o}lder's inequality, we obtain 
	\begin{align*}
		& \hat{\mathbb{E}}\left[ \left\vert \int_{0}^{t}\sigma (t,s,X(s))\mathrm{d}B(s)-\int_{0}^{\bar{t}}\sigma (\bar{t},s,X(s))\mathrm{d}B(s)\right\vert
		^{2+\epsilon }\right] \\
		=& \hat{\mathbb{E}}\left[ \left\vert \int_{0}^{\bar{t}}\sigma
		(t,s,X(s))-\sigma(\bar{t},s,X(s))\mathrm{d}B(s)+\int_{\bar{t}}^{t}\sigma(t,s,X(s))\mathrm{d}B(s)\right\vert ^{2+\epsilon }\right] \\
		\leq & C_{\epsilon }\left( \hat{\mathbb{E}}\left[ \left\vert \int_{0}^{\bar{t}}\sigma(t,s,X(s))-\sigma(\bar{t},s,X(s))\mathrm{d}B(s)\right\vert^{2+\epsilon }\right] +\hat{\mathbb{E}}\left[ \left\vert \int_{\bar{t}}^{t}\sigma (t,s,X(s))dB(s)\right\vert ^{2+\epsilon }\right] \right) \\
		\leq & C_{\bar{\sigma},\epsilon }\left\{ \hat{\mathbb{E}}\left[ \left\vert\int_{0}^{\bar{t}}\left\vert \sigma (t,s,X(s))-\sigma (\bar{t},s,X(s))\right\vert ^{2}\mathrm{d}s\right\vert ^{\frac{2+\epsilon }{2}}\right] +\hat{\mathbb{E}}\left[ \left\vert \int_{\bar{t}}^{t}\left\vert\sigma (t,s,X(s))\right\vert ^{2}\mathrm{d}s\right\vert ^{\frac{2+\epsilon }{2}}\right] \right\} \\
		\leq & C_{\bar{\sigma},T,\epsilon }\left\{ \hat{\mathbb{E}}\left[ \int_{0}^{\bar{t}}\left\vert \sigma (t,s,X(s))-\sigma (\bar{t},s,X(s))\right\vert^{2+\epsilon }\mathrm{d}s\right] +\hat{\mathbb{E}}\left[ \int_{\bar{t}
		}^{t}\left\vert \sigma (t,s,X(s))\right\vert ^{2+\epsilon }\mathrm{d}s\right]\left\vert t-\bar{t}\right\vert ^{\frac{\epsilon }{2}}\right\} \\
		\leq & C_{\bar{\sigma},T,\epsilon }\left\{ \hat{\mathbb{E}}\left[ \int_{0}^{\bar{t}}\bar{K}^{2+\epsilon }(t,\bar{t},s)\left( 1+\left\vert X(s)\right\vert ^{2+\epsilon }\right) \mathrm{d}s\right] +\hat{\mathbb{E}}
		\left[ \int_{\bar{t}}^{t}L^{2+\epsilon }(t,s)\left( 1+\left\vert
		X(s)\right\vert ^{2+\epsilon }\right) \mathrm{d}s\right] \left\vert t-\bar{t}\right\vert ^{\frac{\epsilon }{2}}\right\} \\
		\leq &C_{\bar{\sigma},T,\epsilon}\left\{\int_{0}^{\bar{t}}\bar{K}
		^{2+\epsilon }(t,\bar{t},s)\hat{\mathbb{E}}\left[ 1+\left\vert X(s)\right\vert ^{2+\epsilon }\right] \mathrm{d}s+\left\vert t-\bar{t}\right\vert^{\frac{\epsilon}{2}}\int_{\bar{t}}^{t}L^{2+\epsilon }(t,s)\hat{\mathbb{E}}\left[ 1+\left\vert X(s)\right\vert ^{2+\epsilon }\right] \mathrm{d}s\right\} \\
		\leq & C_{\bar{\sigma},T,\epsilon }\left\{ \left( \sup_{t\in \lbrack 0,T]}\hat{\mathbb{E}}\left[ 1+\left\vert X(t)\right\vert ^{2+\epsilon}\right]\right)\int_{0}^{\bar{t}}\bar{K}^{2+\epsilon }(t,t_{i}^{m},s)\mathrm{d}s\right. \\
		&+\left. \left\vert t-\bar{t}\right\vert ^{\frac{\epsilon }{2}}\left( \int_{\bar{t}}^{t}L^{2+\bar{\epsilon}}(t,s)\mathrm{d}s\right) ^{\frac{2+\epsilon }{2+\bar{\epsilon}}}\left( \int_{\bar{t}}^{t}\left( \hat{\mathbb{E}}\left[
		1+\left\vert X(s)\right\vert ^{2+\epsilon }\right] \right) ^{\frac{2+\bar{\epsilon}}{\bar{\epsilon}-\epsilon }}\mathrm{d}s\right) ^{\frac{\bar{\epsilon}-\epsilon }{2+\bar{\epsilon}}}\right\} \\
		\leq & C_{\bar{\sigma},T,\epsilon }\left( \sup_{t\in \lbrack 0,T]}\hat{\mathbb{E}}\left[ 1+\left\vert X(t)\right\vert ^{2+\epsilon }\right] \right)\left\{ C_{T}\left\vert t-\bar{t}\right\vert ^{\alpha }+\left( \sup_{t\in\lbrack 0,T]}\int_{0}^{T}L^{2+\bar{\epsilon}}(t,s)\mathrm{d}s\right) ^{\frac{2+\epsilon }{2+\bar{\epsilon}}}\left\vert t-\bar{t}\right\vert ^{\frac{\bar{\epsilon}\left( 2+\epsilon \right) }{2\left( 2+\bar{\epsilon}\right) }
		}\right\} .
	\end{align*}
	Note that both $\alpha $ and $\frac{\bar{\epsilon}\left( 2+\epsilon \right) }{2\left( 2+\bar{\epsilon}\right) }$ are greater than $1$. Then using Theorem \ref{continuous}, we yield that there admits a $\delta $-order H\"{o}lder's continuous modification of $\int_{0}^{t}\sigma (t,s,X(s))\mathrm{d}B(s),\ t\in \lbrack 0,T]$ for $\delta \in \left[ 0,\frac{\alpha -1}{2+\epsilon }\wedge \frac{\bar{\epsilon}\epsilon -4}{2\left( 2+\bar{\epsilon}\right) \left( 2+\epsilon \right) }\right) $.
	
	Finally, we deal with 
	\begin{equation*}
		t\mapsto \int_{0}^{t}b(t,s,X(s))\mathrm{d}s+\int_{0}^{t}h(t,s,X(s))\mathrm{d}%
		\langle B\rangle (s).
	\end{equation*}
	Since $\left( a+b\right)^{\frac{1}{2+\epsilon }}\leq a^{\frac{1}{2+\epsilon }}+b^{\frac{1}{2+\epsilon }}$, for $t>\bar{t}$ and each $\omega \in \Omega $,\ we have
	\begin{align*}
		&\left\vert\int_{0}^{t}b(t,s,X(s))ds-\int_{0}^{\bar{t}}b(\bar{t},s,X(s))\mathrm{d}s\right\vert \\
		\leq&\left\vert\int_{0}^{\bar{t}}b(t,s,X(s))-b(\bar{t},s,X(s))\mathrm{d}s\right\vert+\left\vert\int_{\bar{t}}^{t}b(t,s,X(s))\mathrm{d}s\right\vert\\
		\leq &\left\vert \int_{0}^{\bar{t}}K(t,\bar{t},s)\left( 1+\left\vert X(s)\right\vert \right) \mathrm{d}s\right\vert +\left\vert \int_{\bar{t}}^{t}L(t,s)\left( 1+\left\vert X(s)\right\vert \right) \mathrm{d}s\right\vert\\
		\leq&\left(\int_{0}^{\bar{t}}K^{2}(t,\bar{t},s)\mathrm{d}s\right)^{\frac{1}{2}}\left( \int_{0}^{\bar{t}}\left( 1+\left\vert X(s)\right\vert \right)^{2}\mathrm{d}s\right) ^{\frac{1}{2}}+\left(\int_{\bar{t}}^{t}L^{2}(t,s)\mathrm{d}s\right) ^{\frac{1}{2}}\left( \int_{\bar{t}}^{t}\left(1+\left\vert X(s)\right\vert \right) ^{2}\mathrm{d}s\right) ^{\frac{1}{2}} \\
		\leq&C_{T,\epsilon}\left\{\left(\int_{0}^{\bar{t}}\left(1+\left\vert X(s)\right\vert\right)^{2}\mathrm{d}s\right)^{\frac{1}{2}}\left(\int_{0}^{\bar{t}}K^{2+\epsilon}(t,\bar{t},s)\mathrm{d}s\right)^{\frac{1}{2+\epsilon}}\right. \\
		&\left. +\left(\int_{\bar{t}}^{t}L^{2}(t,s)\mathrm{d}s\right) ^{\frac{1}{2}}\left( \int_{\bar{t}}^{t}\left( 1+\left\vert X(s)\right\vert \right)^{2+\epsilon }\mathrm{d}s\right) ^{\frac{1}{2+\epsilon }}\left\vert t-\bar{t}\right\vert ^{\frac{\epsilon }{2\left( 2+\epsilon \right) }}\right\} \\
		\leq &C_{T,\epsilon }\left\{ \left( \int_{0}^{T}\left( 1+\left\vert X(s)\right\vert \right) ^{2}\mathrm{d}s\right) ^{\frac{1}{2}}\rho ^{\frac{1}{2+\epsilon }}\left( \left\vert t-\bar{t}\right\vert \right) \right. \\
		&\left. +\left(\sup_{t\in \lbrack 0,T]}\int_{0}^{T}L^{2}(t,s)\mathrm{d}s\right) ^{\frac{1}{2%
		}}\left( \int_{0}^{T}\left( 1+\left\vert X(s)\right\vert \right)
		^{2+\epsilon }\mathrm{d}s\right) ^{\frac{1}{2+\epsilon }}\left\vert t-\bar{t}\right\vert ^{\frac{\epsilon }{2\left( 2+\epsilon \right) }}\right\} .
	\end{align*}
	Due to $X(\cdot )\in M_{G}^{2+\epsilon }(0,T)$, we get that $\left(\int_{0}^{T}\left( 1+\left\vert X(s)\right\vert \right) ^{2}ds\right) ^{\frac{1}{2}}+\left( \int_{0}^{T}\left( 1+\left\vert X(s)\right\vert \right)^{2+\epsilon }ds\right) ^{\frac{1}{2+\epsilon }}<\infty $ q.s. Thus,
	\begin{equation*}
		\lim_{t\rightarrow\bar{t}}\left\vert\int_{0}^{t}b(t,s,X(s))ds-\int_{0}^{\bar{t}}b(\bar{t},s,X(s))\mathrm{d}s\right\vert =0.
	\end{equation*}
	Similarly, we can obtain 
	\begin{equation*}
		\lim_{t\rightarrow\bar{t}}\left\vert\int_{0}^{t}h(t,s,X(s))\mathrm{d}\langle B\rangle(s)-\int_{0}^{\bar{t}}h(\bar{t},s,X(s))\mathrm{d}\langle B\rangle (s)\right\vert =0.
	\end{equation*}%
	It follows that $X(\cdot )-\phi (\cdot )$ has continuous paths q.s. The	proof is completed.
\end{proof}

\begin{remark}
	In the above theorem, if $\phi (\cdot )$ still has continuous paths, then $X(\cdot )$ has a continuous modification.
\end{remark}

\section{$G$-SVIE with non-Lipschitz coefficients}\label{sec4 G-SVIE non-lip}

In this section, we study the existence and uniqueness of the solution to
equation (\ref{G-SVIE}) under more general conditions. Of course, our
assumptions also need to change. For convenience, here we consider the
simple case.

\begin{description}
	\item[(H1')] For each $t\in \lbrack 0,T]$ and $x\in 
	\mathbb{R},\ b(t,\cdot ,x),\ h(t,\cdot ,x),\ \sigma (t,\cdot ,x)\in
	M_{G}^{2}(0,t)$.
	
	\item[(H2')] For all $x,y\in \mathbb{R}$ and $(t,s)\in \bigtriangleup (t,s)$, 
	\begin{align*}
		&\left\vert b(t,s,x)-b(t,s,y)\right\vert ^{2}+\left\vert h(t,s,x)-h(t,s,y)\right\vert^{2}+\left\vert\sigma(t,s,x)-\sigma(t,s,y)\right\vert ^{2} \leq \psi \left( \left\vert x-y\right\vert^{2}\right) , \\
		&\left\vert b(t,s,x)\right\vert ^{2}+\left\vert h(t,s,x)\right\vert^{2}+\left\vert \sigma (t,s,x)\right\vert ^{2} \leq L^{2}\left(1+\left\vert x\right\vert ^{2}\right) .
	\end{align*}
	where $L\in \mathbb{R}^{+}$ is constant and $\psi :\left[ 0,\infty \right) \rightarrow \left[0,\infty \right) $ is a continuous increasing and concave function satisfies
	\begin{equation*}
		\int_{0^{+}}\frac{\mathrm{d}s}{\psi \left( s\right) }=+\infty , \quad\psi \left( 0\right) =0.
	\end{equation*}
	
	\item[(H3')] For all $x,\alpha \in \mathbb{R}$ and $(t_{1},s)\in \bigtriangleup (t_{1},s),(t_{2},s)\in \bigtriangleup (t_{2},s)$, 
	\begin{equation*}
		\left\vert b(t_{1},s,x)-b(t_{2},s,x)\right\vert ^{2}+\left\vert
		h(t_{1},s,x)-h(t_{2},s,x)\right\vert^{2}+\left\vert\sigma(t_{1},s,x)-\sigma (t_{2},s,x)\right\vert ^{2}\leq \rho \left( \left\vert
		t_{1}-t_{2}\right\vert \right) ,
	\end{equation*}
	where $\rho $\ is defined as in (H3).
\end{description}

Since the coefficients no longer satisfy (H2), we still need to verify that the $G$-stochastic integral is well-defined.

\begin{lemma}\label{nonlip sigmaMG} 
	Define $M(t)=\int_{0}^{t}\sigma (t,s,X(s))\mathrm{d}B(s)$ and $N(t)=\int_{0}^{t}b(t,s,X(s))\mathrm{d}s+\int_{0}^{t}h(t,s,X(s))
	\mathrm{d}\langle B\rangle (s)$. Assume that $X(\cdot )\in M_{G}^{2}(0,T)$. If (H1')-(H3') hold, then $M(\cdot )+N(\cdot )\in \tilde{M}_{G}^{2}(0,T)$.
\end{lemma}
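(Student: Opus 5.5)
The plan follows the two-step structure of the proof of Lemma \ref{sigma MG}. First I show that for each fixed $t$ the integrand $\sigma(t,\cdot,X(\cdot))$ lies in $M_G^2(0,t)$, so that $M(t)\in L_G^2(\Omega_t)$. Then I approximate $M(\cdot)$ by step processes built on partitions of $[0,T]$ to get $M(\cdot)\in M_G^2(0,T)$. The same argument, with the $\mathrm{d}s$ and $\mathrm{d}\langle B\rangle$ integrals handled by Cauchy--Schwarz and $\mathrm{d}\langle B\rangle\le\bar\sigma^2\mathrm{d}s$, gives the claim for $N(\cdot)$.

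\textbf{Step 1: fixed $t$.} The truncation trick of Lemma \ref{sigma MG} is no longer needed, since (H2') bounds the growth by a constant $L$. The difficulty is that $\sigma(t,s,\cdot)$ is only uniformly continuous, with modulus $\psi$. Since $\psi$ is concave with $\psi(0)=0$, for every $\delta>0$ there is $C_\delta$ with $\psi(r)\le \delta + C_\delta r$ for $r\ge0$.
\begin{itemize}
\item Take $X^k\in M_G^{2,0}(0,t)$ simple processes with $X^k\to X$ in $M_G^2$.
\item For a simple $X^k$, on each interval $\sigma(t,s,\xi_j)$ with $\xi_j\in L_G^2(\Omega_{t_j})$ belongs to $M_G^2$. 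This follows from (H1') together with the continuity of $x\mapsto\sigma(t,\cdot,x)$ in $M_G^2$; I would invoke Lemma 3.3 of \cite{liu2019multi} as in Lemma \ref{sigma MG}, possibly after approximating $\xi_j$ by random variables taking finitely many values.
\item Then estimate
\[
\hat{\mathbb E}\Big[\int_0^t|\sigma(t,s,X^k(s))-\sigma(t,s,X(s))|^2\mathrm ds\Big]\le \hat{\mathbb E}\Big[\int_0^t\psi(|X^k-X|^2)\mathrm ds\Big]\le \delta t+C_\delta\|X^k-X\|_{M_G^2}^2.
\]
Letting $k\to\infty$ and then $\delta\to0$ gives the conclusion. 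Alternatively, one can apply Jensen's inequality (Lemma \ref{Jensen's inequality}) on $[0,t]$ together with concavity.
\end{itemize}
Hence $M(t)\in L_G^2(\Omega_t)$, and similarly $N(t)\in L_G^2(\Omega_t)$.

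\textbf{Step 2: approximation in time.} Set $M_m(t)=\sum_i M(t_i^m)I_{[t_i^m,t_{i+1}^m)}(t)$. For $t\in[t_i^m,t_{i+1}^m)$, the isometry (\ref{ito}) together with (H3') and (H2') gives
\[
\hat{\mathbb E}|M(t)-M(t_i^m)|^2\le 2\bar\sigma^2\Big(\int_0^{t_i^m}\rho(|t-t_i^m|)\mathrm ds+\hat{\mathbb E}\int_{t_i^m}^tL^2(1+|X(s)|^2)\mathrm ds\Big).
\]
The first term is at most $2\bar\sigma^2T\rho(\|\Pi_T^m\|)$, uniformly in $t$. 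The second term cannot be bounded by $\sup_t\hat{\mathbb E}|X(t)|^2$, because only $X\in M_G^2$ is assumed. Instead I integrate over $t$ and use that $\hat{\mathbb E}$ of an integral is dominated by the integral of the upper expectation integrand:
\[
\hat{\mathbb E}\int_0^T|M-M_m|^2\mathrm dt\le C T\rho(\|\Pi^m_T\|)+C\,\hat{\mathbb E}\Big[\sum_i\int_{t_i^m}^{t_{i+1}^m}\int_{t_i^m}^t(1+|X(s)|^2)\mathrm ds\,\mathrm dt\Big].
\]
This step uses subadditivity and the fact that $\hat{\mathbb E}[\int_0^T Y_t\,\mathrm dt]\le\int_0^T\hat{\mathbb E}[Y_t]\,\mathrm dt$ fails in general; I keep the expectation outside and apply Fubini pathwise. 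The last term is then at most $C\|\Pi^m_T\|\,\hat{\mathbb E}\int_0^T(1+|X|^2)\mathrm ds\to0$.

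\textbf{Main obstacle.} The delicate point is the exchange in Step 2. The bound on $\hat{\mathbb E}|M(t)-M(t_i^m)|^2$ holds pointwise in $t$, but the expectation of the time integral must be controlled directly. I would handle this by expressing $\int_0^T|M-M_m|^2\mathrm dt$ through the representation of Theorem \ref{thm2.1}. Under each $P\in\mathcal P$ the classical Fubini theorem and the It\^o isometry with $\mathrm d\langle B\rangle\le\bar\sigma^2\mathrm dt$ give the bound above with $E_P$ in place of $\hat{\mathbb E}$, and taking the supremum over $P$ finishes the argument. The analogous computation for $N$ uses Cauchy--Schwarz in place of the isometry.
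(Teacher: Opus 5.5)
Your two-step structure matches the paper's, but the paper's proof is entirely by citation. Step 1 is Lemma 3.4 of \cite{bai2014on}, and Step 2 is deferred to Lemma 3.2 of \cite{zhao2025G-SVIE}. You supply the arguments yourself.

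\textbf{Step 2 is a different and correct argument.} The pointwise-in-$t$ estimate in the proof of Lemma~\ref{sigma MG} needs $\sup_t\hat{\mathbb E}[|X(t)|^{2+\epsilon}]<\infty$. That hypothesis is absent here, and you rightly notice this. Working under each $P\in\mathcal P$, with Tonelli and the classical isometry, does give
\[
\hat{\mathbb E}\Big[\int_0^T|M(t)-M_m(t)|^2\mathrm dt\Big]\le 2\bar\sigma^2\Big(T^2\rho(\|\Pi_T^m\|)+L^2\|\Pi_T^m\|\,\hat{\mathbb E}\Big[\int_0^T(1+|X(s)|^2)\mathrm ds\Big]\Big),
\]
because pathwise $\sum_i\int_{t_i^m}^{t_{i+1}^m}\int_{t_i^m}^t g(s)\,\mathrm ds\,\mathrm dt\le\|\Pi_T^m\|\int_0^Tg(s)\,\mathrm ds$.

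You should say why the isometry holds under each $P$. Either the $G$-integral coincides $P$-a.s. with the $P$-It\^o integral, or $E_P[|\int\eta\,\mathrm dB|^2]\le\bar\sigma^2E_P[\int|\eta|^2\mathrm ds]$ holds for simple $\eta$ and extends by density, since $\|\cdot\|_{M_G^2}$ dominates every $E_P$-norm.

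Your remark about the exchange is backwards. The inequality $\hat{\mathbb E}[\int_0^TY_t\,\mathrm dt]\le\int_0^T\hat{\mathbb E}[Y_t]\,\mathrm dt$ does hold for $Y\ge 0$, and the paper uses it repeatedly. It is the reverse inequality that fails, and that is the one you would need to collapse $\int_0^T\hat{\mathbb E}[\int_{t_i^m}^t(1+|X|^2)\mathrm ds]\mathrm dt$.

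A $G$-native alternative uses uniform integrability of $X\in M_G^2$:
\[
\hat{\mathbb E}\Big[\int_{t_i^m}^t|X|^2\mathrm ds\Big]\le N^2\|\Pi_T^m\|+\hat{\mathbb E}\Big[\int_0^T|X|^2I_{\{|X|>N\}}\mathrm ds\Big],
\]
uniformly in $t$. This closes the paper-style pointwise estimate directly.

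\textbf{Gap in Step 1.} The core claim $\sigma(t,\cdot,\xi_j)\in M_G^2$ for $\xi_j\in L_G^2(\Omega_{t_j})$ does not go through as written. This claim is exactly the content of the cited Bai--Lin lemma. Your two suggested justifications both fail:
\begin{itemize}
\item Lemma 3.3 of \cite{liu2019multi} is applied in the proof of Lemma~\ref{sigma MG} only after checking Lipschitz continuity in $x$, and (H2') does not give Lipschitz continuity.
\item Approximating $\xi_j$ by finitely-valued random variables leaves $L_G^2$. Such variables are built from indicators $I_{\{\xi_j\in A\}}$, which need not be quasi-continuous.
\end{itemize}

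The fix is to approximate in the $x$-variable instead. Let $\lambda_k^n$ be the hat functions at the nodes $k/n$, and set $\sigma^n(t,s,x)=\sum_k\lambda_k^n(x)\sigma(t,s,k/n)$. Then:
\begin{itemize}
\item For fixed $x$, $\sigma^n(t,\cdot,x)$ is a combination of two elements of $M_G^2(0,t)$ with deterministic weights, by (H1').
\item By (H2'), $\sigma^n$ is Lipschitz in $x$ with constant $n\sqrt{\psi(n^{-2})}$.
\item $|\sigma^n-\sigma|\le\sqrt{\psi(n^{-2})}$ uniformly in $(\omega,s,x)$.
\end{itemize}
Lemma 3.3 of \cite{liu2019multi} then gives $\sigma^n(t,\cdot,X(\cdot))\in M_G^2(0,t)$ directly. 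Moreover,
\[
\hat{\mathbb E}\Big[\int_0^t|\sigma^n(t,s,X(s))-\sigma(t,s,X(s))|^2\mathrm ds\Big]\le t\,\psi(n^{-2})\to0.
\]
This makes both the simple-process approximation of $X$ and the bound $\psi(r)\le\delta+C_\delta r$ unnecessary, although that part of your argument is correct.
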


\begin{proof}
	According to Lemma 3.4 in \cite{bai2014on}, we konw that $\sigma (t,\cdot ,X(\cdot))\in M_{G}^{2}(0,t)$ for each $t$. It follows that  $M(t)\in L_{G}^{2}(\Omega _{t})$. Next, similar to the proof of Lemma 3.2 in \cite{zhao2025G-SVIE}, we can obtain $M(\cdot )\in M_{G}^{2}(0,T)$. Therefore, $M(\cdot)\in \tilde{M}_{G}^{2}(0,T)$. Similarly, we can prove that $N(\cdot )\in \tilde{M}_{G}^{2}(0,T)$.
\end{proof}

In the following, we present the main result of this section.

\begin{theorem}
	Assume that $\phi (\cdot )\in \tilde{M}_{G}^{2}(0,T)$ satisfies 
	\begin{equation*}
		\sup_{t\in \lbrack 0,T]}\hat{\mathbb{E}}[|\phi (t)|^{2}]<\infty .
	\end{equation*}
	If (H1' )-(H3') hold, then there exists a unique solution $X(\cdot )\in \tilde{M}_{G}^{2}(0,T)$ of (\ref{G-SVIE}) satisfies $\sup\limits_{t\in \lbrack 0,T]}\hat{\mathbb{E}}[|X(t)|^{2}]<\infty $. Moreover, $X(\cdot )$ is mean-square continuous.
\end{theorem}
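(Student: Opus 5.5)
We follow the Picard scheme of Theorem \ref{solution}, replacing the Gronwall argument by Bihari's inequality (Lemma \ref{Bihari's inequality}) and the sublinear Jensen inequality (Lemma \ref{Jensen's inequality}).

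\emph{Picard iterates.} Set $X_0(t)=\phi(t)$ and define $X_n$ by (\ref{Xn}). Suppose $X_{n-1}\in\tilde M_G^2(0,T)$ with $\sup_t\hat{\mathbb E}[|X_{n-1}(t)|^2]<\infty$. Then Lemma \ref{nonlip sigmaMG} gives $X_n-\phi\in\tilde M_G^2(0,T)$, hence $X_n\in\tilde M_G^2(0,T)$. Using (\ref{ito}) and the linear growth bound in (H2'), the same computation as (\ref{supXn}) yields
\[
\hat{\mathbb E}[|X_n(t)|^2]\le C\Big(\sup_t\hat{\mathbb E}[|\phi(t)|^2]+L^2\int_0^t\big(1+\hat{\mathbb E}[|X_{n-1}(s)|^2]\big)\,\mathrm ds\Big).
\]
Iterating this inequality gives a bound uniform in $n$: $\sup_n\sup_t\hat{\mathbb E}[|X_n(t)|^2]\le C_T$.

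\emph{Difference estimates.} For the differences, (H2') together with Lemma \ref{Jensen's inequality} (applicable since $\psi$ is concave and increasing) gives
\[
\hat{\mathbb E}[|X_{n+1}(t)-X_{m+1}(t)|^2]\le C_{\bar\sigma,T}\int_0^t\psi\big(\hat{\mathbb E}[|X_n(s)-X_m(s)|^2]\big)\,\mathrm ds .
\]
This is the main obstacle, because Bihari's inequality cannot be applied to a sequence directly. I would follow the Yamada--Watanabe/Bai--Lin device. Set $u_{n,m}(t)=\sup_{s\le t}\hat{\mathbb E}[|X_n(s)-X_m(s)|^2]$ and $v(t)=\limsup_{n,m\to\infty}u_{n,m}(t)$. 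The uniform bound shows $v$ is finite. Using Fatou's lemma for the $\limsup$ (legitimate since the integrands are dominated by $\psi(C_T)$), monotonicity and continuity of $\psi$, we obtain $v(t)\le C\int_0^t\psi(v(s))\,\mathrm ds$. Since $\int_{0^+}\mathrm ds/\psi(s)=\infty$, Lemma \ref{Bihari's inequality} gives $v\equiv0$. Hence $(X_n)$ is Cauchy in the norm $(\sup_t\hat{\mathbb E}[|\cdot|^2])^{1/2}$, so there is a limit $X\in\tilde M_G^2(0,T)$, using completeness of $L_G^2(\Omega_t)$ for each $t$ and of $M_G^2$ via $\|\cdot\|_{M_G^2}\le T^{1/2}(\sup_t\hat{\mathbb E}[|\cdot|^2])^{1/2}$.

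\emph{Passage to the limit.} To show $X$ solves (\ref{G-SVIE}), we need convergence of the integral terms. For the stochastic integral, (\ref{ito}) and Jensen give
\[
\hat{\mathbb E}\Big[\Big|\int_0^t\big(\sigma(t,s,X)-\sigma(t,s,X_{n-1})\big)\,\mathrm dB\Big|^2\Big]\le\bar\sigma^2 T\,\psi\Big(\sup_s\hat{\mathbb E}[|X-X_{n-1}|^2]\Big),
\]
which tends to $0$ because $\psi$ is continuous with $\psi(0)=0$. The $\mathrm ds$ and $\mathrm d\langle B\rangle$ terms are handled in the same way.

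\emph{Uniqueness.} If $X,Y$ are two solutions, the same estimate yields $u(t)\le C\int_0^t\psi(u(s))\,\mathrm ds$ for $u(t)=\sup_{s\le t}\hat{\mathbb E}[|X(s)-Y(s)|^2]$, and Bihari's inequality forces $u\equiv0$.

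\emph{Mean-square continuity.} For $t>t'$, split
\[
\int_0^t\sigma(t,s,X(s))\,\mathrm dB-\int_0^{t'}\sigma(t',s,X(s))\,\mathrm dB=\int_0^{t'}\big(\sigma(t,s,X)-\sigma(t',s,X)\big)\,\mathrm dB+\int_{t'}^t\sigma(t,s,X)\,\mathrm dB .
\]
By (H3') and (H2'), its second moment is at most $\bar\sigma^2\big(T\rho(|t-t'|)+L^2(t-t')\sup_s\hat{\mathbb E}[1+|X(s)|^2]\big)\to0$. The drift terms are treated identically. Since $\phi$ is assumed mean-square continuous in this setting (as in the Remark after Theorem \ref{solution}; otherwise the conclusion is for $X-\phi$), $X$ is mean-square continuous.
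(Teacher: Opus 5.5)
Your proposal is correct and follows essentially the paper's route: Picard iterates, a uniform bound on $\sup_t\hat{\mathbb{E}}[|X_n(t)|^2]$, the sublinear Jensen inequality, a $\limsup$/Fatou argument closed by Bihari's inequality, and (H3') for continuity. The one difference is that you apply Bihari to $\sup_{s\le t}\hat{\mathbb{E}}[|X_n(s)-X_m(s)|^2]$ instead of the paper's $\int_0^t\hat{\mathbb{E}}[|X_n(s)-X_m(s)|^2]\,\mathrm{d}s$; this gives the Cauchy property directly in the $\sup_t$ norm and avoids the paper's second Jensen step in time.

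Your caveat on the last claim is also right. The paper's proof, like yours, only shows that $X(\cdot)-\phi(\cdot)$ is mean-square continuous, and for $X$ itself one needs $\phi$ to be mean-square continuous (with $b=h=\sigma=0$ one has $X=\phi$).
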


\begin{proof}
	Existence. Here, we still prove the existence of the solution through the Picard iteration method. The iterative equation is the same as (\ref{Xn}). From Lemma \ref{nonlip sigmaMG} and $X_{0}(\cdot )=\phi (\cdot )\in \tilde{M}_{G}^{2}(0,T)$, we have $X_{n}(\cdot )$ $\in \tilde{M}_{G}^{2}(0,T)$. Through (H1')-(H2') and similar analyses as in (\ref{supXn}), 
	\begin{align*}
		\hat{\mathbb{E}}[|X_{n}(t)|^{2}] =&\hat{\mathbb{E}}\left[ \left\vert\phi(t)+\int_{0}^{t}b(t,s,X_{n-1}(s))\mathrm{d}s+\int_{0}^{t}h(t,s,X_{n-1}(s))\mathrm{d}\langle B\rangle (s)+\int_{0}^{t}\sigma (t,s,X_{n-1}(s))\mathrm{d}B(s)\right\vert ^{2}\right]  \\
		\leq &{}C\left( \hat{\mathbb{E}}[|\phi (t)|^{2}]+\left( (\bar{\sigma}^{4}+1)T+\bar{\sigma}^{2}\right) \hat{\mathbb{E}}\left[\int_{0}^{t}L^{2}(1+|X_{n-1}(s)|^{2})\mathrm{d}s\right] \right)  \\
		\leq &{}C_{\bar{\sigma},T,L}\left( \sup_{t\in \lbrack 0,T]}\hat{\mathbb{E}}[|\phi (t)|^{2}]+\hat{\mathbb{E}}\left[ \int_{0}^{t}|X_{n-1}(s)|^{2}\mathrm{d}s\right] \right)  \\
		\leq &{}C_{\bar{\sigma},T,L}\left( \sup_{t\in \lbrack 0,T]}\hat{\mathbb{E}}[|\phi(t)|^{2}]+\int_{0}^{t}\hat{\mathbb{E}}\left[ |X_{n-1}(s)|^{2}\right] \mathrm{d}s\right) 
	\end{align*}
	Set $M_{\phi }=\sup_{t\in \lbrack 0,T]}\hat{\mathbb{E}}[|\phi (t)|^{2}]$. Then 
	\begin{align*}
		\hat{\mathbb{E}}[|X_{1}(t)|^{2}] \leq &C_{\bar{\sigma},T,L}\left(
		\sup_{t\in\lbrack0,T]}\hat{\mathbb{E}}[|\phi(t)|^{2}]+\hat{\mathbb{E}}\left[ \int_{0}^{t}|X_{0}(s)|^{2}\mathrm{d}s\right] \right)  \\
		\leq &C_{\bar{\sigma},T,L}M_{\phi }\left( t+1\right) .
	\end{align*}
	By standard calculation, we yield 
	\begin{align*}
		\hat{\mathbb{E}}[|X_{n}(t)|^{2}] \leq&C_{\bar{\sigma},T,L}M_{\phi }\exp(C_{\bar{\sigma},T,L}t)+M_{\phi }\frac{\left( C_{\bar{\sigma},T,L}t\right)^{n}}{n!} \\
		\leq&C_{\bar{\sigma},T,L}M_{\phi}\exp(C_{\bar{\sigma},T,L}T)+M_{\phi }\frac{\left( C_{\bar{\sigma},T,L}T\right) ^{n}}{n!},
	\end{align*}
	which implies 
	\begin{equation}
		\sup_{n}\sup_{t\in\lbrack0,T]}\hat{\mathbb{E}}[|X_{n}(t)|^{2}]<\infty. \label{supn supt xn}
	\end{equation}%
	Denote 
	\begin{equation*}
		\hat{b}(x-y):=b(t,s,x)-b(t,s,y),\ \hat{h}(x-y):=h(t,s,x)-h(t,s,y),\ \hat{\sigma}(x-y):=\sigma (t,s,x)-\sigma (t,s,y).
	\end{equation*}
	Due to (H2'), it follows that 
	\begin{align*}
		&\hat{\mathbb{E}}[|X_{n+m+1}(t)-X_{m+1}(t)|^{2}] \\
		\leq &\hat{\mathbb{E}}\left[\left\vert \int_{0}^{t}\hat{b}(X_{n+m}(s)-X_{m}(s))\mathrm{d}s+\int_{0}^{t}\hat{h}(X_{n+m}(s)-X_{m}(s))\mathrm{d}\langle B\rangle (s)+ \int_{0}^{t}\hat{\sigma}(X_{n+m}(s)-X_{m}(s))\mathrm{d}B(s)\right\vert ^{2}\right]  \\
		\leq &C\left( \hat{\mathbb{E}}\left[ \left\vert \int_{0}^{t}\hat{b}(X_{n+m}(s)-X_{m}(s))\mathrm{d}s\right\vert ^{2}\right] +\hat{\mathbb{E}}\left[ \left\vert \int_{0}^{t}\hat{h}(X_{n+m}(s)-X_{m}(s))\mathrm{d}\langle B\rangle (s)\right\vert ^{2}\right] \right.  \\
		&+\left. \hat{\mathbb{E}}\left[ \left\vert\int_{0}^{t}\hat{\sigma}
		(X_{n+m}(s)-X_{m}(s))\mathrm{d}B(s)\right\vert ^{2}\right] \right)  \\
		\leq &C_{\bar{\sigma},T}\hat{\mathbb{E}}\left[ \int_{0}^{t}\psi \left(\left\vert X_{n+m}(s)-X_{m}(s)\right\vert ^{2}\right) \mathrm{d}s\right]  \\
		\leq &C_{\bar{\sigma},T}\int_{0}^{t}\hat{\mathbb{E}}\left[ \psi \left(\left\vert X_{n+m}(s)-X_{m}(s)\right\vert ^{2}\right) \right] \mathrm{d}s.
	\end{align*}
	Applying Lemma \ref{Jensen's inequality}, we have
	\begin{align*}
		\hat{\mathbb{E}}[|X_{n+m+1}(t)-X_{m+1}(t)|^{2}] \leq &C_{\bar{\sigma},T}\int_{0}^{t}\psi \left( \hat{\mathbb{E}}\left[ \left\vert X_{n+m}(s)-X_{m}(s)\right\vert ^{2}\right] \right) \mathrm{d}s \\
		\leq&C_{\bar{\sigma},T}\psi\left(\int_{0}^{t}\hat{\mathbb{E}}\left[\left\vert X_{n+m}(s)-X_{m}(s)\right\vert ^{2}\right] \mathrm{d}s\right).
	\end{align*}
	Integrate simultaneously on both sides, we get
	\begin{equation}
		\int_{0}^{t}\hat{\mathbb{E}}[|X_{n+m+1}(s)-X_{m+1}(s)|^{2}]\mathrm{d}s\leq C_{\bar{\sigma},T}\int_{0}^{t}\psi \left( \int_{0}^{s}\hat{\mathbb{E}}\left[\left\vert X_{n+m}(r)-X_{m}(r)\right\vert ^{2}\right] \mathrm{d}r\right) 
		\mathrm{d}s.  \label{nonlip Xnm-Xm}
	\end{equation}%
	Put $u_{n+m+1,m+1}(t)=\int_{0}^{t}\hat{\mathbb{E}}[|X_{n+m+1}(s)-X_{m+1}(s)|^{2}]\mathrm{d}s$. Then 
	\begin{equation*}
		u_{n+m+1,m+1}(t)\leq C_{\bar{\sigma},T}\int_{0}^{t}\psi\left(u_{n+m,m}(s)\right) \mathrm{d}s.
	\end{equation*}%
	From (\ref{supn supt xn}), it follows that 
	\begin{equation*}
		\sup_{n,m}\sup_{t\in \lbrack 0,T]}u_{n+m+1,m+1}(t)<\infty .
	\end{equation*}
	Define $u(t):=\limsup_{n,m\rightarrow \infty }u_{n,m}(t)$. Due to
	Fatou's lemma, we deduce that 
	\begin{align*}
		u(t) \leq &C_{\bar{\sigma},T}\limsup_{n,m\rightarrow \infty}\int_{0}^{t}\psi \left( u_{n+m,m}(s)\right) \mathrm{d}s \\
		\leq & C_{\bar{\sigma},T}\int_{0}^{t}\limsup_{n,m\rightarrow \infty}\psi \left( u_{n+m,m}(s)\right) \mathrm{d}s \\
		\leq & C_{\bar{\sigma},T}\int_{0}^{t}\psi \left( u(s)\right) \mathrm{d}s.
	\end{align*}
	By Lemma \ref{Bihari's inequality}, it follows that $u(t)=0$ for $t\in \lbrack 0,T]$. Thus, we obtain $\left( X_{n}\right) _{n\geq 0}$ is a Cauchy sequence under the norm $\left(\int_{0}^{T}\hat{\mathbb{E}}[|X_{n}(t)|^{2}]\mathrm{d}t\right)^{\frac{1}{2}}$. Note that $\Vert X_{n}\Vert _{M_{G}^{2}}\leq \left(
	\int_{0}^{T}\hat{\mathbb{E}}[|X_{n}(t)|^{2}]\mathrm{d}t\right)^{\frac{1}{2}}$. Therefore, we can find $X(\cdot )$ $\in M_{G}^{2}[0,T]$ such that
	\begin{equation}
		\lim_{n\rightarrow \infty }\int_{0}^{T}\hat{\mathbb{E}}\left[
		|X(t)-X_{n}(t)|^{2}\right] \mathrm{d}t=0.  \label{nonlip X-Xn}
	\end{equation}%
	Next, we need to prove 
	\begin{equation}
		\lim_{n\rightarrow\infty}\hat{\mathbb{E}}\left[\left\vert\int_{0}^{t}\sigma(t,s,X(s))-\sigma(t,s,X_{n-1}(s))\mathrm{d}B(s)\right\vert ^{2}\right] =0.  \label{nonlip sigma xn}
	\end{equation}%
	From (H2') and (\ref{nonlip X-Xn}), it leads to
	\begin{align*}
		&\hat{\mathbb{E}}\left[ \left\vert \int_{0}^{t}\sigma (t,s,X(s))-\sigma(t,s,X_{n-1}(s))\mathrm{d}B(s)\right\vert ^{2}\right]  \\
		\leq&\bar{\sigma}^{2}\hat{\mathbb{E}}\left[\int_{0}^{t}\left\vert \sigma(t,s,X(s))-\sigma (t,s,X_{n-1}(s))\right\vert ^{2}\mathrm{d}s\right]  \\
		\leq &\bar{\sigma}^{2}\hat{\mathbb{E}}\left[ \int_{0}^{t}\psi \left(\left\vert X(s)-X_{n-1}(s)\right\vert ^{2}\right) \mathrm{d}s\right]  \\
		\leq &\bar{\sigma}^{2}\int_{0}^{t}\hat{\mathbb{E}}\left[ \psi \left(\left\vert X(s)-X_{n-1}(s)\right\vert ^{2}\right) \right] \mathrm{d}s \\
		\leq &\bar{\sigma}^{2}\int_{0}^{t}\psi\left(\hat{\mathbb{E}}\left[
		\left\vert X(s)-X_{n-1}(s)\right\vert ^{2}\right] \right) \mathrm{d}s \\
		\leq &\bar{\sigma}^{2}\psi\left(\int_{0}^{t}\hat{\mathbb{E}}\left[
		\left\vert X(s)-X_{n-1}(s)\right\vert^{2}\right]\mathrm{d}s\right) .
	\end{align*}
	According to (\ref{nonlip X-Xn}), we obtain that (\ref{nonlip sigma xn}) holds. Similarly, we can prove that
	\begin{align*}
		&\lim_{n\rightarrow \infty }\hat{\mathbb{E}}\left[ \left\vert
		\int_{0}^{t}b(t,s,X_{n-1}(s))\mathrm{d}s+\int_{0}^{t}h(t,s,X_{n-1}(s))\mathrm{d}\langle B\rangle (s)\right\vert ^{2}\right] \\
		=&\hat{\mathbb{E}}\left[\left\vert\int_{0}^{t}b(t,s,X(s))\mathrm{d}s+\int_{0}^{t}h(t,s,X(s))\mathrm{d}\left\langle B\right\rangle
		(s)\right\vert ^{2}\right] .
	\end{align*}
	Let $n\rightarrow \infty $ on both side of (\ref{Xn}), we obtain that $X$ is a solution of equation (\ref{G-SVIE}) and $\sup\limits_{t\in \lbrack 0,T]}\hat{\mathbb{E}}[|X(t)|^{2}]<\infty $. Moreover, $X(\cdot )\in \tilde{M}_{G}^{2}(0,T)$.
	
	Uniqueness. Let $X(\cdot ),\ Y(\cdot )\in \tilde{M}_{G}^{2}(0,T)$ be two solutions of equation (\ref{G-SVIE}). Similar to (\ref{nonlip Xnm-Xm}), we can obtain 
	\begin{equation}
		\int_{0}^{T}\hat{\mathbb{E}}[|X(t)-Y(t)|^{2}]\mathrm{d}t\leq C_{\bar{\sigma},T}\int_{0}^{T}\psi\left(\int_{0}^{t}\hat{\mathbb{E}}\left[ \left\vert X(s)-Y(s)\right\vert ^{2}\right] \mathrm{d}s\right) \mathrm{d}t.  \notag
	\end{equation}
	Due to Lemma \ref{Bihari's inequality}, we obtain  
	\begin{equation*}
		\int_{0}^{T}\hat{\mathbb{E}}[|X(t)-Y(t)|^{2}]\mathrm{d}t=0.
	\end{equation*}%
	Thus, 
	\begin{equation*}
		\Vert X-Y\Vert _{M_{G}^{2}}\leq \left(\int_{0}^{T}\hat{\mathbb{E}}
		[|X(t)-Y(t)|^{2}]\mathrm{d}t\right) ^{\frac{1}{2}}=0.
	\end{equation*}
	
	Continuity. Similar to the proof of Theorem \ref{solution}, we obtain that $X(\cdot)-\phi(\cdot)\in \tilde{M}_{G}^{2}(0,T)$ is mean-square continuous. The proof is complete.
\end{proof}

\section{$G$-SVIE with a parameter}\label{sec5 G-SVIE para}

In this section, we consider the following equation: for $0\leq T<\infty $
and $\alpha \in \mathbb{R}$,
\begin{equation}
	X_{\alpha }(t)=\phi_{\alpha}(t)+\int_{0}^{t}b_{\alpha}(t,s,X_{\alpha }(s))\mathrm{d}s+\int_{0}^{t}h_{\alpha }(t,s,X_{\alpha }(s))\mathrm{d}\langle B\rangle (s)+\int_{0}^{t}\sigma _{\alpha }(t,s,X_{\alpha}(s))\mathrm{d}B(s),\ t\in \lbrack 0,T].  \label{G-SVIE alpha}
\end{equation}
Here $b_{\alpha }(\omega ,t,s,x),h_{\alpha }(\omega ,t,s,x),\sigma _{\alpha}(\omega ,t,s,x):\mathbb{R}\times \Omega \times \bigtriangleup (t,s)\times \mathbb{R}\rightarrow \mathbb{R}$ and $\phi _{\alpha }(\omega ,t):\mathbb{R}\times \Omega \times \lbrack 0,T]\rightarrow \mathbb{R}$. For convenience, here we consider a simple case.

\begin{description}
	\item[(H1'')] For each $t\in \lbrack 0,T]$ and $x,\alpha \in \mathbb{R},\ b_{\alpha }(t,\cdot ,x),\ h_{\alpha }(t,\cdot ,x),\ \sigma _{\alpha}(t,\cdot ,x)\in M_{G}^{2}(0,t)$.
	
	\item[(H2'')] For all $x,y,\alpha \in \mathbb{R}$ and $(t,s)\in \bigtriangleup (t,s)$, there exists a positive constant $L$
	(independently on $\alpha $) such that 
	\begin{align*}
		&\left\vert b_{\alpha }(t,s,x)-b_{\alpha }(t,s,y)\right\vert +\left\vert h_{\alpha }(t,s,x)-h_{\alpha }(t,s,y)\right\vert +\left\vert \sigma _{\alpha}(t,s,x)-\sigma _{\alpha }(t,s,y)\right\vert \leq L\left\vert
		x-y\right\vert , \\
		&\left\vert b_{\alpha }(t,s,x)\right\vert ^{2}+\left\vert h_{\alpha}(t,s,x)\right\vert ^{2}+\left\vert \sigma _{\alpha }(t,s,x)\right\vert ^{2}\leq L\left( 1+\left\vert x\right\vert ^{2}\right) .
	\end{align*}
	
	\item[(H3'')] For all $x,\alpha \in \mathbb{R}$ and $(t_{1},s)\in \bigtriangleup (t_{1},s),(t_{2},s)\in \bigtriangleup(t_{2},s)$, 
	\begin{equation*}
		\left\vert b_{\alpha }(t_{1},s,x)-b_{\alpha }(t_{2},s,x)\right\vert^{2}+\left\vert h_{\alpha }(t_{1},s,x)-h_{\alpha }(t_{2},s,x)\right\vert^{2}+\left\vert \sigma_{\alpha}(t_{1},s,x)-\sigma_{\alpha}(t_{2},s,x)\right\vert ^{2}\leq \rho \left( \left\vert t_{1}-t_{2}\right\vert \right) ,
	\end{equation*}
	where $\rho $\ is defined as in (H3).
	
	\item[(H5)] For all $x,\alpha ,\beta \in \mathbb{R}$ and $(t,s)\in \bigtriangleup (t,s)$, there exists a constant $\bar{L}$ such that
	\begin{equation*}
		\left\vert \phi _{\alpha }(t)-\phi _{\beta }(t)\right\vert +\left\vert b_{\alpha }(t,s,x)-b_{\beta }(t,s,x)\right\vert +\left\vert h_{\alpha}(t,s,x)-h_{\beta }(t,s,x)\right\vert +\left\vert \sigma _{\alpha}(t,s,x)-\sigma _{\beta }(t,s,x)\right\vert \leq \bar{L}\left\vert \alpha
		-\beta \right\vert .
	\end{equation*}
\end{description}

\begin{theorem}
	Assume that $\phi _{\alpha }(\cdot )\in \tilde{M}_{G}^{2}(0,T)$ satisfies 
	\begin{equation*}
		\sup_{t\in \lbrack 0,T]}\hat{\mathbb{E}}[|\phi (t)|^{2}]<\infty .
	\end{equation*}
	If (H1'')-(H3'') and (H5) hold, then there exists a unique solution $X_{\alpha}(\cdot )\in \tilde{M}_{G}^{2}(0,T)$ of (\ref{G-SVIE alpha}) satisfies $\sup\limits_{t\in\lbrack0,T]}\hat{\mathbb{E}}[|X_{\alpha }(t)|^{2}]<\infty $. Moreover, $X_{\alpha }(t)$ is mean-square continuous with respect to $t$ and has a quasi-continuous modification with respect to $\alpha $.
\end{theorem}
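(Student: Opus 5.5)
For each fixed $\alpha$, I will reduce existence, uniqueness and mean-square continuity in $t$ to Theorem \ref{solution}. Take $L(t,s)\equiv L$ constant, or $L^{1/2}\vee L$ to match both inequalities in (H2''). Then (H2) holds with $\epsilon=0$ and any $\bar{\epsilon}>0$, because $\sup_t\int_0^T L^{2+\bar\epsilon}\,ds=L^{2+\bar\epsilon}T<\infty$. Next, (H3'') gives (H3) with $K^{2}(t_1,t_2,s)=\rho(|t_1-t_2|)$, so that $\int_0^{t_2}K^2\,ds\le T\rho(|t_1-t_2|)$. Finally, (H1'') is (H1). Theorem \ref{solution} and the Remark following it, with $\phi_\alpha\in\tilde M_G^2(0,T)$, then give a unique $X_\alpha\in\tilde M_G^2(0,T)$ with $\sup_t\hat{\mathbb E}[|X_\alpha(t)|^2]<\infty$. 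They also show that $X_\alpha-\phi_\alpha$ is mean-square continuous; combined with continuity of $\phi_\alpha$ this gives the claimed continuity in $t$. Alternatively one can argue directly from estimate (\ref{sigam t t'}) with constant $L$. I will also record the uniform bound $\sup_\alpha\sup_t\hat{\mathbb E}[|X_\alpha(t)|^2]\le C(1+\sup_\alpha\sup_t\hat{\mathbb E}[|\phi_\alpha(t)|^2])$. It follows from the linear growth in (H2'') and Gronwall, using that $L$ does not depend on $\alpha$. Here I read the hypothesis on $\phi$ as uniform in $\alpha$; otherwise (H5) supplies this locally in $\alpha$, since $|\phi_\alpha-\phi_0|\le\bar L|\alpha|$.

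For the dependence on $\alpha$, fix $t$ and $p\ge 2$. The goal is the estimate $\hat{\mathbb E}[|X_\alpha(t)-X_\beta(t)|^p]\le C|\alpha-\beta|^p$. I split the increment as
\[
b_\alpha(t,s,X_\alpha)-b_\beta(t,s,X_\beta)=\bigl(b_\alpha(t,s,X_\alpha)-b_\alpha(t,s,X_\beta)\bigr)+\bigl(b_\alpha(t,s,X_\beta)-b_\beta(t,s,X_\beta)\bigr),
\]
and do the same for $h$ and $\sigma$. The first bracket is bounded by $L|X_\alpha-X_\beta|$ using (H2''), and the second by $\bar L|\alpha-\beta|$ using (H5); the $\phi$ difference is likewise at most $\bar L|\alpha-\beta|$. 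Applying Theorem \ref{BDG} to the $dB$ term, the bound $d\langle B\rangle\le\bar\sigma^2ds$, and Hölder, I get
\[
\hat{\mathbb E}[|X_\alpha(t)-X_\beta(t)|^p]\le C|\alpha-\beta|^p+C\int_0^t\hat{\mathbb E}[|X_\alpha(s)-X_\beta(s)|^p]\,ds .
\]
Gronwall then gives $\sup_t\hat{\mathbb E}[|X_\alpha(t)-X_\beta(t)|^p]\le C|\alpha-\beta|^p$. For $p=2$ this needs only the results already established.

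To apply Theorem \ref{continuous} in the variable $\alpha$, I need an exponent exceeding $1$ in the Kolmogorov criterion; the plan is $p>1$, and $p=2$ gives $|\alpha-\beta|^{2}=|\alpha-\beta|^{1+1}$. Working on a bounded $\alpha$-interval, Theorem \ref{continuous} (its proof is a Kolmogorov argument, so it applies with index $\alpha$ in place of time) yields a modification $\tilde X_\cdot(t)$ that is Hölder continuous in $\alpha$ of any order $<1/2$ q.s. Patching over the intervals $[-k,k]$ then gives a quasi-continuous modification on $\mathbb R$.

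I expect the main obstacle to be the moment estimate: it requires the right-hand side to be finite, and this is where the uniform-in-$\alpha$ bound and the $\alpha$-independence of $L$ from the first step enter. A secondary subtlety is justifying that Theorem \ref{continuous}, stated for processes indexed by time, applies to the family indexed by the parameter $\alpha$. I would handle this by noting that its proof uses only the moment bound. If needed, I would instead replace the $\alpha$-index by a time reparametrization on $[0,1]$ via an affine map of $[-k,k]$.
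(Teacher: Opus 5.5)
Your proposal follows the paper's proof. You reduce each fixed $\alpha$ to Theorem \ref{solution}, and your explicit choice $L(t,s)\equiv L\vee L^{1/2}$, $K^2=\rho$ fills in the verification the paper leaves implicit. You then combine (H2'') and (H5) with the BDG/H\"older bounds and Gronwall to get $\hat{\mathbb{E}}[|X_\alpha(t)-X_\beta(t)|^2]\le C|\alpha-\beta|^2$, and apply Theorem \ref{continuous} with $p=2$, $\varepsilon=1$ to obtain a H\"older modification in $\alpha$ of order $<\frac12$.

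One caveat applies to both you and the paper: Theorem \ref{solution} only gives mean-square continuity of $X_\alpha-\phi_\alpha$. The ``continuity of $\phi_\alpha$'' you invoke is not among the stated hypotheses, and without it the claim fails: with $b=h=\sigma\equiv 0$ and $\phi_\alpha\equiv I_{[T/2,T]}$, all hypotheses hold but $X_\alpha=\phi_\alpha$ is not mean-square continuous.
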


\begin{proof}
	According to Theorem \ref{solution}, we know there exists a unique solution $ X_{\alpha }(\cdot )\in \tilde{M}_{G}^{2}(0,T)$ of (\ref{G-SVIE alpha}) for each $\alpha $. In addition, $X_{\alpha }(\cdot )$ is mean-square continuous. By (H2'') and (H5), it follows that
	\begin{align*}
		\left\vert b_{\alpha }(t,s,x)-b_{\beta }(t,s,y)\right\vert  \leq
		&\left\vert b_{\alpha }(t,s,x)-b_{\beta }(t,s,x)\right\vert +\left\vert b_{\beta }(t,s,x)-b_{\beta }(t,s,y)\right\vert  \\
		\leq &\bar{L}\left\vert \alpha -\beta \right\vert +L\left\vert
		x-y\right\vert .
	\end{align*}
	Similarly, we can obtain 
	\begin{align*}
		&\left\vert h_{\alpha }(t,s,x)-h_{\beta }(t,s,y)\right\vert +\left\vert\sigma _{\alpha }(t,s,x)-\sigma _{\beta }(t,s,y)\right\vert  \\
		\leq &\bar{L}\left\vert \alpha -\beta \right\vert +L\left\vert
		x-y\right\vert .
	\end{align*}
	Applying Theorem \ref{BDG} and H\"{o}lder's inequality, we have 
	\begin{align*}
		&\hat{\mathbb{E}}[|X_{\alpha }(t)-X_{\beta }(t)|^{2}] \\
		\leq &C\left\{ \hat{\mathbb{E}}[\left\vert \phi _{\alpha }(t)-\phi _{\beta}(t)\right\vert ^{2}]+\hat{\mathbb{E}}\left[ \left\vert\int_{0}^{t}b_{\alpha }(t,s,X_{\alpha }(s))-b_{\beta }(t,s,X_{\beta }(s))\mathrm{d}s\right\vert ^{2}\right] \right.  \\
		&\left. +\hat{\mathbb{E}}\left[ \left\vert \int_{0}^{t}h_{\alpha
		}(t,s,X_{\alpha}(s))-h_{\beta}(t,s,X_{\beta}(s))\mathrm{d}\left\langle B\right\rangle(s)\right\vert^{2}\right]+\hat{\mathbb{E}}\left[ \left\vert\int_{0}^{t}\sigma _{\alpha }(t,s,X_{\alpha }(s))-\sigma _{\beta}(t,s,X_{\beta }(s))\mathrm{d}B(s)\right\vert ^{2}\right] \right\}  \\
		\leq &C_{\bar{\sigma},T}\left\{ \bar{L}^{2}\left\vert \alpha -\beta\right\vert ^{2}+\hat{\mathbb{E}}\left[ \left\vert \int_{0}^{t}L\left\vert X_{\alpha }(s)-X_{\beta}(s)\right\vert \mathrm{d}s\right\vert ^{2}\right] +\hat{\mathbb{E}}\left[
		\int_{0}^{t}\left\vert \sigma _{\alpha }(t,s,X_{\alpha }(s))-\sigma _{\beta}(t,s,X_{\beta }(s))\right\vert ^{2}\mathrm{d}s\right] \right\}  \\
		\leq &C_{\bar{\sigma},T,\bar{L}}\left\{ \left\vert \alpha -\beta\right\vert^{2}+\hat{\mathbb{E}}\left[\int_{0}^{t}L^{2}\left\vert X_{\alpha }(s)-X_{\beta }(s)\right\vert ^{2}\mathrm{d}s\right] +\hat{\mathbb{E}}\left[ \int_{0}^{t}\left\vert \bar{L}\left\vert \alpha -\beta \right\vert+L\left\vert X_{\alpha }(s)-X_{\beta }(s)\right\vert \right\vert ^{2}\mathrm{d}s\right] \right\}  \\
		\leq&C_{\bar{\sigma},T,L,\bar{L}}\left\{\left\vert\alpha-\beta\right\vert ^{2}+\hat{\mathbb{E}}\left[ \int_{0}^{t}\left\vert X_{\alpha}(s)-X_{\beta }(s)\right\vert ^{2}\mathrm{d}s\right] \right\}  \\
		\leq &C_{\bar{\sigma},T,L,\bar{L}}\left\{ \left\vert \alpha -\beta
		\right\vert ^{2}+\int_{0}^{t}\hat{\mathbb{E}}\left[ \left\vert X_{\alpha}(s)-X_{\beta }(s)\right\vert ^{2}\right] \mathrm{d}s\right\} .
	\end{align*}%
	Due to Gronwall's inequality, we yield 
	\begin{align*}
		\hat{\mathbb{E}}[|X_{\alpha }(t)-X_{\beta }(t)|^{2}] \leq &C_{\bar{\sigma},T,L,\bar{L}}\left\{ \left\vert \alpha -\beta \right\vert ^{2}+\int_{0}^{t}\hat{\mathbb{E}}\left[ \left\vert X_{\alpha }(s)-X_{\beta }(s)\right\vert^{2}\right] \mathrm{d}s\right\}  \\
		\leq &C_{\bar{\sigma},T,L,\bar{L}}\left\vert \alpha -\beta \right\vert ^{2}.
	\end{align*}
	From Theorem \ref{continuous}, we yield that there exist a $\delta $-order H\"{o}lder's continuous modification of $X_{\alpha }(t),\ \alpha \in \mathbb{R}$ for $\delta \in \left[ 0,\frac{1}{2}\right) $. The proof is complete.
\end{proof}

\section*{Declarations}

\subsection*{Funding}
Not applicable.
\subsection*{Ethical approval}
Not applicable.
\subsection*{Informed consent}
Not applicable.
\subsection*{Author Contributions}
All authors contributed equally to each part of this work. All authors read and approved the final manuscript.
\subsection*{Data Availability Statement}
Not applicable.
\subsection*{Conflict of Interest}
The authors declare that they have no known competing financial interests or personal relationships that could have appeared to influence the work reported in this paper.
\subsection*{Clinical Trial Number}
Not applicable.


\bigskip




\begin{thebibliography}{99} 
	
\bibitem{bai2014on}X. Bai, Y. Lin, On the exsitence and uniqueness of solution to stochastic differential equations driven by $G$-brownian motion with integral-lipschtiz coefficients, Acta Math. Appl. Sin. Engl. Ser. 30 (3) (2014) 589–610.
	
\bibitem{berger1980volterra}M. Berger, V. Mizel, Volterra equations with It\^{o} integrals, I, II, J. Integral Equ. 2 (1980) 187–245, 319–337.

\bibitem{chen2007a}S. Chen, J. Yong, A linear quadratic optimal control problems for stochastic Volterra integral equations, in Control Theory and Related Topics: In Memory of Prof. Xunjing Li, edited by S. Tang and J. Yong. Word Scientific Publishing Company (2007) 44–66.

\bibitem{denis2011function}L. Denis, M. Hu, S. Peng, Function spaces and capacity related to a sublinear expectation: application to $ G $-Brownian motion paths, Potential Anal. 34 (2011) 139–161.

\bibitem{ferreyra2000comparison}G. Ferreyra, P. Sundar, Comparison of stochastic Volterra equations, Bernoulli 6 (2000) 1001–1006.

\bibitem{gao2009pathwise}F. Gao, Pathwise properties and homomorphic flows for stochastic differential equations driven by $ G $-Brownian motion, Stochastic Process. Appl. 119 (10) (2009) 3356–3382.

\bibitem{hu2014backward}M. Hu, S. Ji, S. Peng, Y. Song, Backward stochastic differential equations driven by $G$-Brownian motion, Stoch. Process. Appl. 124 (2014) 759–784.

\bibitem{hu2009representation}M. Hu, S. Peng, On representation theorem of $ G $-expectations and paths of $ G $-Brownian motion, Acta Math. Appl. Sin. Engl. Ser. 25 (3) (2009) 539–546.

\bibitem{hu2016quasi}M. Hu, F. Wang, G. Zheng, Quasi-continuous random variables and processes under the $ G $-expectation framework, Stoch. Process. Appl. 126 (2016) 2367–2387.


\bibitem{ito1979on}I. Ito, On the existence and uniqueness of solutions of stochastic integral equations of the Volterra type, Kodui Math. J. 2 (1979) 158-170.

\bibitem{li2011stopping}X. Li, S. Peng, Stopping times and related It\^{o}'s calculus with $ G $-Brownian motion, Stochastic Process. Appl. 121 (2011) 1492–1508.

\bibitem{liu2019multi}G. Liu, Multi-dimensional BSDEs driven by $G$-Brownian motion and related system of fully nonlinear PDEs, Stochastics 92(5) (2019) 659–683. 

\bibitem{pardoux1990stochastic}E. Pardoux, P. Protter, Stochastic Volterra equations with anticipating coefficients. Ann. Probab. 18 (1990) 1635–1655.

\bibitem{peng2004filtration}S. Peng, Filtration consistent nonlinear expectations and evaluations of contingent claims, Acta Math. Appl. Sin. 20 (2004) 1–24.

\bibitem{peng2005nonlinear}S. Peng, Nonlinear expectations and nonlinear Markov chains, Chinese Ann. Math. 26B (2) (2005) 159–184.

\bibitem{peng2007G}S. Peng, $ G $-expectation, $ G $-Brownian motion and related stochastic calculus of It\^{o} type, in: Stochastic Analysis and Applications, in: Abel Symp., vol. 2, Springer, Berlin, 2007, pp. 541–567.


\bibitem{peng2008multi}S. Peng, Multi-dimensional $ G $-Brownian motion and related stochastic calculus under $ G $-expectation, Stochastic Process. Appl. 118 (12) (2008) 2223–2253.

\bibitem{peng2019nonlinear}S. Peng, Nonlinear Expectations and Stochastic Calculus under Uncertainty, Springer-Verlag, Heidelberg, 2019.

\bibitem{protter1985volterra}P. Protter, Volterra equations driven by semimartingales. Ann. Probab. 13 (1985) 519–530.


\bibitem{shi2015optimal}Y. Shi, T. Wang, J. Yong, Optimal control problems of forward-backward stochastic Volterra integral equations. Math. Control Relat. Fields 5 (2015) 613–649.

\bibitem{tudor1989a}C. Tudor, A comparion theorem for stochastic equations with Volterra drifts, Ann. Probab. 17 (1989) 1541–1545.

\bibitem{wang2018lq}T. Wang, Linear quadratic control problems of stochastic Volterra integral equations. ESAIM Control Optim. Calc. Var. 24(4) 1849–1879.

\bibitem{wang2010symmetrical}T. Wang, Y. Shi, Symmetrical solutions of backward stochastic Volterra integral equations and applications. Discrete Contin. Dyn. Syst. B. 14 (2010) 251–274.

\bibitem{wang2015com}
T. Wang, J. Yong, Comparison theorems for some backward stochastic Volterra integral equations, Stochastic Process. Appl. 125 (2015) 1756–1798.

\bibitem{wang2017optimal}T. Wang, H. Zhang, Optimal control problems of forward-backward stochastic Volterra integral equations with closed control regions. SIAM J. Control Optim. 55(4) (2017) 2574–2602.


\bibitem{wang2017parameter}Y. Wang, Stochastic Volterra integral equations with a parameter, Adv. Differ. Equ. 2017 (2017) 333.

\bibitem{wang2008existence}Z. Wang, Existence and uniqueness of solutions to stochastic Volterra equations with singular kernels and non-Lipschitz coefficients, Statist. Probab. Lett. 78 (2008) 1062–1071.

\bibitem{yong2006backward}J. Yong, Backward stochastic Volterra integral equations and some related problems. Stochastic Process Appl. 116 (2006) 779–795.

\bibitem{yong2008well}J. Yong, Well-posedness and regularity of backward stochastic Volterra integral equation. Probab. Theory Related Fields 142 (2008) 21–77.

\bibitem{zhang2010stochastic}X. Zhang, Stochastic Volterra equations in Banach spaces and stochastic partial differential equation. J. Funct. Anal. 258 (2010) 1361–1425.

\bibitem{zhao2025G-SVIE}B. Zhao, R. Li, M. Hu, Stochastic Volterra Integral Equations Driven by $G$-Brownian Motion, Math. Methods Appl. Sci. (2025).





	
	
	
	
	
\end{thebibliography}
\end{document}